\newtheorem{thm}{Theorem}[section]
\newtheorem{prop}[thm]{Proposition}
\newtheorem{defn}[thm]{Definition}
\newtheorem{claim}{Claim}
\newtheorem*{provethm}{Theorem \ref{noTU}}
\theoremstyle{definition}
\theoremstyle{remark}
\newtheorem*{rem}{Remark}
\newtheorem{step}{Step}
\newtheorem{case}{Case}
\newcommand{\bb}[1]{\mathbb{#1}}
\newcommand{\ra}{\rightarrow}
\newcommand{\Ra}{\Rightarrow}
\newcommand{\minus}{\,\backslash\,}
\renewcommand{\epsilon}{\varepsilon}
\tikzstyle{vertex} = [fill=black, circle, inner sep = 0pt, minimum size = 2mm]
\tikzstyle{edge} = [->, thick, >=latex]
\begin{document}

\title{Structure theorem for $U_5$-free tournaments}
\author{Gaku Liu \\
\small Department of Mathematics \\[-0.8ex]
\small Massachusetts Institute of Technology \\[-0.8ex]
\small Cambridge, MA 02139-4307 \\
\small\tt{gakuliu@mit.edu}
}

\date{August 9, 2012}

\maketitle

\begin{abstract}
Let $U_5$ be the tournament with vertices $v_1$, \dots, $v_5$ such that $v_2 \ra v_1$, and $v_i \ra v_j$ if $j-i \equiv 1$, $2 \pmod{5}$ and $\{i,j\} \neq \{1,2\}$. In this paper we describe the tournaments which do not have $U_5$ as a subtournament. Specifically, we show that if a tournament $G$ is ``prime''---that is, if there is no subset $X \subseteq V(G)$, $1 < |X| < |V(G)|$, such that for all $v \in V(G) \minus X$, either $v \ra x$ for all $x \in X$ or $x \ra v$ for all $x \in X$---then $G$ is $U_5$-free if and only if either $G$ is a specific tournament $T_n$ or $V(G)$ can be partitioned into sets $X$, $Y$, $Z$ such that $X \cup Y$, $Y \cup Z$, and $Z \cup X$ are transitive. From the prime $U_5$-free tournaments we can construct all the $U_5$-free tournaments. We use the theorem to show that every $U_5$-free tournament with $n$ vertices has a transitive subtournament with at least $n^{\log_3 2}$ vertices, and that this bound is tight.
\end{abstract}

\section{Introduction}

A \emph{tournament} $G$ is a loopless directed graph such that for any two distinct vertices $u$, $v \in V(G)$, there is exactly one edge with both ends in $\{u,v\}$. In this paper, all tournaments are finite.
A \emph{subtournament} of a tournament $G$ is a tournament induced on a subset of $V(G)$.
For $X \subseteq V(G)$, let $G[X]$ denote the subtournament of $G$ induced on $X$.
Given two tournaments $G$ and $H$, we say that $G$ is \emph{$H$-free} if $G$ has no subtournament isomorphic to $H$; otherwise, $G$ \emph{contains} $H$.
Given a tournament $G$ and a vertex $v \in V(G)$, let $B_G(v) = \{u \in V(G) : u \ra v\}$ be the set of \emph{predecessors} of $v$ in $G$, and let $A_G(v) = \{u \in V(G) : v \ra u\}$ be the set of \emph{successors} of $v$ in $G$. 
For two disjoint sets $X$, $Y \subseteq V(G)$, we write $X \Ra Y$ if $x \ra y$ for all $x \in X$, $y \in Y$. We use $v \Ra X$ and $X \Ra v$ to mean $\{v\} \Ra X$ and $X \Ra \{v\}$, respectively.
The \emph{dual} of a tournament $G$ is the tournament obtained by reversing all edges of $G$.
A \emph{cyclic triangle} in a tournament $G$ is a set $\{v_1,v_2,v_3\} \subseteq V(G)$ of three distinct vertices such that $v_1 \ra v_2 \ra v_3 \ra v_1$. A \emph{transitive} tournament is a tournament with no cyclic triangle; a tournament $G$ is transitive if and only if its vertices can be ordered $v_1$, \dots, $v_{|V(G)|}$ such that $v_i \ra v_j$ if $i < j$. Let $I_n$ denote the transitive tournament with $n$ vertices. If the subtournament induced on a subset $X \subseteq V(G)$ is transitive, we say that $X$ is transitive.

Given a tournament $G$, a \emph{homogeneous set} of $G$ is a subset $X \subseteq V(G)$ such that for all vertices $v \in V(G) \minus X$, either $v \Ra X$ or $X \Ra v$. A homogeneous set $X \subseteq V(G)$ is \emph{nontrivial} if $1 < |X| < |V(G)|$; otherwise it is \emph{trivial}. 
A tournament is \emph{prime} if all of its homogeneous sets are trivial.
Given a tournament $G$ and a nonempty homogeneous set $X$ of $G$, let $G / X$ denote the tournament isomorphic to $G\left[ (V(G) \minus X) \cup \{v\} \right]$, where $v$ is any vertex in $X$. (Note that $G / X$ is well-defined up to isomorphism.) Thus, if $G$ has a nontrivial homogeneous set $X$, we can express it as the combination of two tournaments $G / X$ and $G[X]$ each of which has less vertices than $G$. In addition, note that if $H$ is a prime tournament, then $G$ is $H$-free if and only if $G / X$ and $G[X]$ are both $H$-free.

Define $U_5$ to be the tournament with vertices $v_1$, \dots, $v_5$ such that $v_2 \ra v_1$, and $v_i \ra v_j$ if $j-i \equiv 1$, $2 \pmod{5}$ and $\{i,j\} \neq \{1,2\}$. (Alternatively, $U_5$ is the tournament with vertices $u_1$, \dots, $u_5$ such that for any $1 \le i < j \le 5$, we have $u_i \ra u_j$ if $i$, $j$ are not both odd, and $u_j \ra u_i$ otherwise.) The tournament $U_5$ is prime. In this paper, we characterize the $U_5$-free tournaments. To do this, it suffices to characterize the prime $U_5$-free tournaments, because any tournament $G$ with a nontrivial homogeneous set $X$ is $U_5$-free if and only if the strictly smaller tournaments $G / X$ and $G[X]$ are $U_5$-free.

To state the main theorem, we define $T_n$ for odd $n \ge 1$ to be the tournament with vertices $v_1$, \dots, $v_n$ such that $v_i \ra v_j$ if $j-i \equiv 1$, 2, \dots, $(n-1)/2 \pmod{n}$. The theorem is as follows.

\begin{thm}\label{noU}
Let $G$ be a prime tournament. Then $G$ is $U_5$-free if and only if $G$ is $T_n$ for some odd $n \ge 1$ or $V(G)$ can be partitioned into sets $X$, $Y$, $Z$ such that $X \cup Y$, $Y \cup Z$, and $Z \cup X$ are transitive.
\end{thm}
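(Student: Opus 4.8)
My plan is to isolate two features of $U_5$ that the two families on the right-hand side lack. Call a tournament \emph{locally transitive} if $A_G(w)$ and $B_G(w)$ are transitive for every vertex $w$; this property is clearly inherited by subtournaments. I would first check that every $T_n$ is locally transitive --- indeed $A_{T_n}(v_i)=\{v_{i+1},\dots,v_{i+(n-1)/2}\}$ and $B_{T_n}(v_i)=\{v_{i-(n-1)/2},\dots,v_{i-1}\}$ are each transitively ordered by index --- whereas $U_5$ is not, since in the notation of its definition $A_{U_5}(v_2)=\{v_1,v_3,v_4\}$ and $v_1\ra v_3\ra v_4\ra v_1$. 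Hence no $T_n$ contains $U_5$. For the other family I would check by hand that $U_5$ has no transitive set of size $4$ (each of its five $4$-element subsets contains a cyclic triangle); then, if $V(G)=X\cup Y\cup Z$ with $X\cup Y$, $Y\cup Z$, $Z\cup X$ all transitive and $S\subseteq V(G)$ induced a copy of $U_5$, pigeonhole on $|S\cap X|+|S\cap Y|+|S\cap Z|=5$ forces one of the three pairwise unions to meet $S$ in at least $4$ vertices, giving a transitive set of size $\ge 4$ inside $G[S]\cong U_5$ --- a contradiction. So both families are $U_5$-free.

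\textbf{The ``only if'' direction.} Let $G$ be prime and $U_5$-free; I may assume $|V(G)|\ge 3$, so $G$ is strongly connected (otherwise the source strongly connected component, or the set of all vertices but one, is a nontrivial homogeneous set). I would then split on whether $G$ is locally transitive. If it is, the plan is to invoke (or reprove) the classification of strongly connected locally transitive tournaments: starting from a vertex and repeatedly ``rotating'', using that each $A_G(w)$ and $B_G(w)$ is a single transitively ordered interval, one produces a cyclic labelling $v_1,\dots,v_n$ realizing $G$ as $T_n$, with $n$ odd forced by strong connectivity. If $G$ is \emph{not} locally transitive, then some $A_G(v)$ or $B_G(v)$ contains a cyclic triangle $\{a,b,c\}$, and the goal becomes to manufacture the partition $X$, $Y$, $Z$.

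For that last case the plan is to classify, for each $u\notin\{a,b,c\}$, which of $a,b,c$ the vertex $u$ dominates --- up to the rotation $a\ra b\ra c\ra a$ there are four patterns (dominating none, one, two, or all three) --- and to show via $U_5$-freeness that only certain combinations of patterns can coexist: the recurring move is that a cyclic triangle together with two further vertices in a ``forbidden'' mutual configuration always contains $U_5$. Combining these constraints with primality of $G$ and with the transitivity of the two-part unions one is assembling should force $V(G)$ to split into three parts $X$, $Y$, $Z$; transitivity of, say, $X\cup Y$ is then argued by contradiction --- a cyclic triangle inside $X\cup Y$, paired with a suitably chosen vertex of $Z$ and the forced edges, completes to a copy of $U_5$. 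I expect this to be the hard part: essentially all of the case analysis lives in controlling how vertices attach first to the cyclic triangle and then to the growing transitive pieces without ever creating a $U_5$. A convenient way to organize it is by induction on $|V(G)|$ --- peel off a vertex to obtain a smaller prime $U_5$-free tournament, using that a prime tournament not isomorphic to any $T_m$ has a prime subtournament with one fewer vertex, and then reattach that vertex to the inductively obtained $T_m$ or three-part tournament --- but the direct local analysis above is an alternative.
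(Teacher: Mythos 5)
Your ``if'' direction is correct and is essentially the paper's argument: the pigeonhole step for the three-part partition is exactly what the paper does, and your local-transitivity observation is a clean way to see that $T_n$ is $U_5$-free. The problem is the ``only if'' direction, where the sketch stops precisely where the theorem's actual content begins. Saying that the attachment patterns of vertices to a cyclic triangle, ``combined with primality,'' \emph{should} force the partition is not an argument: the difficulty is not listing the local constraints but showing they cohere into a single global partition of $V(G)$, and nothing in the proposal indicates how that assembly terminates or why it cannot get stuck. In the paper this is the entire Section 5: one inducts on $|V(G)|$, takes a prime subtournament $G'$ on $|V(G)|-1$ vertices, threads a sequence of cyclic triangles $C_1,\dots,C_{|V(G')|-2}$ through the inductively obtained partition of $G'$ (Proposition~\ref{sequence}), and then runs a two-directional induction along that sequence (Claims~\ref{direction}--\ref{up}) to locate the new vertex. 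Your ``direct local analysis'' alternative has no analogue of this scaffolding.

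Two further concrete problems. First, the paper's key local lemma (Proposition~\ref{lemma}, especially part~\ref{BA}) needs $G$ to be $T_5$-free as well as $U_5$-free; the paper earns this by first proving Theorem~\ref{T5} (a prime tournament containing $T_5$ is $T_n$ or contains $U_5$), which itself rests on the nontrivial one-vertex-extension Theorem~\ref{grow}. Your dichotomy ``locally transitive versus not'' does not obviously deliver $T_5$-freeness in the second branch: to rule out $T_5$ there you would need to show that a prime $U_5$-free tournament containing $T_5$ is locally transitive, which is essentially Theorem~\ref{T5} again, not a shortcut around it. (Relatedly, ``prime, strongly connected, locally transitive $\Rightarrow T_n$'' is true but is itself a nontrivial claim: strongly connected locally transitive tournaments are the round tournaments, which need not be regular, and you must use primality to exclude the non-regular ones --- e.g.\ consecutive vertices with out-degrees $d_i=d_{i+1}+1$ form a homogeneous pair.) Second, the version of the critical-tournament theorem you invoke at the end --- that a prime tournament not isomorphic to any $T_m$ has a prime subtournament with one fewer vertex --- is false: $U_n$ and $W_n$ are also critical. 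The correct statement (Theorem~\ref{critical}) excludes all three families, which is why the paper must handle $G\cong W_n$ separately by exhibiting its partition explicitly before the induction can proceed.
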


The paper is organized as follows. In Section~\ref{primetournaments}, we review some results on prime tournaments and introduce the ``critical'' tournaments. In Section~\ref{examples}, we give some examples of prime $U_5$-free tournaments and verify that they satisfy Theorem~\ref{noU}. In Section~\ref{preliminaries}, we prove several preliminary facts that will be used in the proof of the main theorem. Section~\ref{mainproof} is the proof of the main theorem. In Section~\ref{transsub}, we use Theorem~\ref{noU} to show that every $U_5$-free tournament with $n$ vertices has a transitive subtournament with at least $n^{\log_3 2}$ vertices, and that this bound is tight.

\section{Prime tournaments}\label{primetournaments}

We list some properties of prime tournaments. First, note that each strong component of a tournament is a homogeneous set. Thus, if a tournament is prime, either it is strongly connected or all of its strong components have exactly one vertex. In the latter case, the tournament is transitive, and a transitive tournament is prime if and only if it has at most two vertices. Thus, every prime tournament with at least three vertices is strongly connected. 

All tournaments with at most two vertices are prime. The only prime tournament with three vertices is the tournament whose vertex set is a cyclic triangle. There are no prime tournaments with four vertices. For five vertices, there are exactly three prime tournaments $T_5$, $U_5$, and $W_5$, drawn in Figure~\ref{TUW5}. Every prime tournament with at least five vertices contains at least one of $T_5$, $U_5$, or $W_5$ (Ehrenfeucht and Rozenberg~\cite{ER}).

\begin{figure}[htbp]
\centering
\subfloat[$T_5$]{
\begin{tikzpicture}[scale=1.5]
\node[vertex] (v1) at (90:1) {};
\node[vertex] (v2) at (18:1) {};
\node[vertex] (v3) at (306:1) {};
\node[vertex] (v4) at (234:1) {};
\node[vertex] (v5) at (162:1) {};
\draw[edge] (v1) to (v2);
\draw[edge] (v1) to (v3);
\draw[edge] (v2) to (v3);
\draw[edge] (v2) to (v4);
\draw[edge] (v3) to (v4);
\draw[edge] (v3) to (v5);
\draw[edge] (v4) to (v5);
\draw[edge] (v4) to (v1);
\draw[edge] (v5) to (v1);
\draw[edge] (v5) to (v2);
\end{tikzpicture}
} \hspace{1.5cm}
\subfloat[$U_5$]{
\begin{tikzpicture}[scale=1.5]
\node[vertex] (v1) at (90:1) {};
\node[vertex] (v2) at (18:1) {};
\node[vertex] (v3) at (306:1) {};
\node[vertex] (v4) at (234:1) {};
\node[vertex] (v5) at (162:1) {};
\draw[edge] (v2) to (v1); 
\draw[edge] (v1) to (v3);
\draw[edge] (v2) to (v3);
\draw[edge] (v2) to (v4);
\draw[edge] (v3) to (v4);
\draw[edge] (v3) to (v5);
\draw[edge] (v4) to (v5);
\draw[edge] (v4) to (v1);
\draw[edge] (v5) to (v1);
\draw[edge] (v5) to (v2);
\end{tikzpicture}
} \hspace{1.5cm}
\subfloat[$W_5$]{
\begin{tikzpicture}[scale=1.5]
\node[vertex] (v1) at (90:1) {};
\node[vertex] (v2) at (18:1) {};
\node[vertex] (v3) at (306:1) {};
\node[vertex] (v4) at (234:1) {};
\node[vertex] (v5) at (162:1) {};
\draw[edge] (v2) to (v1); 
\draw[edge] (v1) to (v3);
\draw[edge] (v2) to (v3);
\draw[edge] (v2) to (v4);
\draw[edge] (v4) to (v3); 
\draw[edge] (v3) to (v5);
\draw[edge] (v4) to (v5);
\draw[edge] (v4) to (v1);
\draw[edge] (v5) to (v1);
\draw[edge] (v5) to (v2);
\end{tikzpicture}
}
\caption{The three five-vertex prime tournaments}
\label{TUW5}
\end{figure}
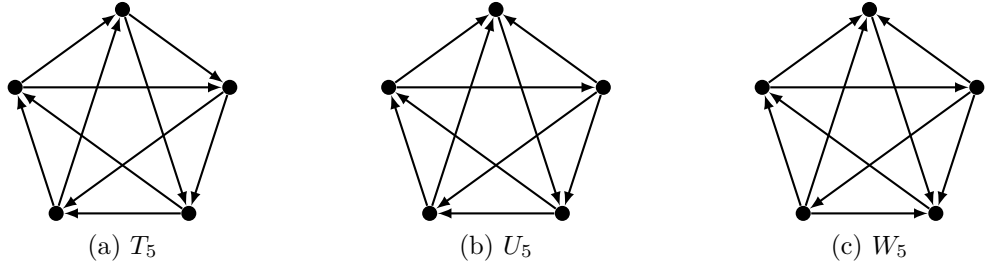

The tournaments $T_5$, $U_5$, and $W_5$ have the following generalizations to any odd number of vertices.

\begin{defn}\label{TUW}
Let $n \ge 1$ be odd. The tournaments $T_n$, $U_n$, and $W_n$ are defined as follows.
\begin{itemize}
\item $T_n$ is the tournament with vertices $v_1$, \dots, $v_n$ such that $v_i \ra v_j$ if $j-i \equiv 1$, $2$, \dots, $(n-1)/2 \pmod{n}$.
\item $U_n$ is the tournament obtained from $T_n$ by reversing all edges which have both ends in $\{v_1,\dotsc,v_{(n-1)/2}\}$.
\item $W_n$ is the tournament with vertices $v$, $w_1$, \dots, $w_{n-1}$ such that $w_i \ra w_j$ if $i < j$, and $\{w_i :  i \text{ even}\} \Ra v \Ra \{w_i : i \text{ odd}\}$.
\end{itemize}
\end{defn}

The tournaments $T_n$, $U_n$, and $W_n$ are prime for all odd $n \ge 1$, and their only prime subtournaments with at least three vertices are $T_m$, $U_m$, and $W_m$, respectively, for odd $3 \le m < n$. In addition, these tournaments are all isomorphic to their own duals. Finally, note that $T_1$, $U_1$, and $W_1$ are all the one-vertex tournament, and $T_3$, $U_3$, and $W_3$ are all the cyclic triangle tournament.

The tournaments $T_n$, $U_n$, and $W_n$ are known as the ``critical tournaments'' due to the following theorem by Schmerl and Trotter.

\begin{thm}[Schmerl and Trotter~\cite{ST}]\label{critical}
If $G$ is a prime tournament with $|V(G)| \ge 6$, and $G$ is not $T_n$, $U_n$, or $W_n$ for any odd $n$, then $G$ has a prime subtournament with $|V(G)| - 1$ vertices.
\end{thm}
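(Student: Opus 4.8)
The plan is to prove the contrapositive. Call a prime tournament $G$ \emph{critical} if $G - v$ is not prime for every $v \in V(G)$; I will show that every critical tournament with $n = |V(G)| \ge 6$ is $T_n$, $U_n$, or $W_n$, which in particular forces $n$ odd. The central tool is a classification of how an outside vertex attaches to a prime set. Fix $W \subseteq V(G)$ with $G[W]$ prime and $|W| \ge 3$, and take $x \in V(G) \minus W$. I claim exactly one of the following holds: (i) $G[W \cup \{x\}]$ is prime; (ii) $x$ is \emph{uniform} on $W$, that is $x \Ra W$ or $W \Ra x$; (iii) there is a unique $w \in W$ with $\{x,w\}$ homogeneous in $G[W \cup \{x\}]$, i.e. $x$ agrees with $w$ on all of $W \minus \{w\}$. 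Indeed, a nontrivial homogeneous set $M$ of $G[W \cup \{x\}]$ cannot be a nontrivial subset of the prime set $W$; so either $M = W$, giving (ii), or $x \in M$, in which case $M \minus \{x\}$ is homogeneous in $G[W]$ and hence is a singleton (giving (iii)) or all of $W$ (making $M$ the whole vertex set, which is trivial). Write $\text{Ext}(W)$ for the vertices of type (i).

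I next reduce to just two extra vertices. By the hereditary-primitivity facts of \cite{ER}, every prime tournament with $n \ge 5$ vertices has a prime subtournament on $n-1$ or $n-2$ vertices. If $G$ is critical, the $n-1$ option is excluded, so there is a prime $W$ with $|W| = n-2$. Since there is no prime tournament on exactly four vertices, $n \neq 6$; this already shows there are no critical tournaments on six vertices, and for $n \ge 7$ we have $|W| \ge 5$. Write $V(G) \minus W = \{a,b\}$. Criticality applied to $G-a = G[W\cup\{b\}]$ and $G-b = G[W\cup\{a\}]$ shows that neither $a$ nor $b$ lies in $\text{Ext}(W)$, so each is of type (ii) or (iii) relative to $W$. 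At the same time $G$ itself is prime, which imposes strong cross-conditions: any homogeneous set that $a$ forms with $W$ must be broken by $b$, and vice versa. Concretely, if $a$ clones onto $w_a$ and $b$ onto $w_b$, then $b$ must distinguish $a$ from $w_a$ and $a$ must distinguish $b$ from $w_b$; the uniform subcases are handled in the same way and are immediately seen to be very restrictive.

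The heart of the argument is induction on $n$, with base cases $n=5$ (where $T_5$, $U_5$, $W_5$ are the only prime tournaments, by \cite{ER}) and $n=6$ (no critical tournament, as above). For the inductive step with $n \ge 7$ I will show that the pair $\{a,b\}$ can be chosen so that $W = G - a - b$ is itself critical, so that by induction $W$ is $T_{n-2}$, $U_{n-2}$, or $W_{n-2}$. I then exploit the strong symmetry of these families—each is self-dual, $T_n$ and $U_n$ are essentially rotational, and $W_n$ is built on a transitive path with a single special vertex—to enumerate, up to automorphism, the type-(ii)/(iii) ways a vertex can attach to the base. The mutual cross-splitting constraints eliminate every attachment except the unique one reproducing the next member of the same family, giving $T_n$, $U_n$, or $W_n$; the same enumeration shows that an even value of $n$ admits no consistent critical attachment, so critical tournaments exist only for odd $n$.

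The main obstacle is this final reattachment analysis, which must control two things simultaneously. First, one must show a pair can be removed keeping the smaller tournament critical, so that the induction applies—this requires ruling out "degeneration" under every pair-removal. Second, one must show the re-insertion of $a$ and $b$ into $T_{n-2}$, $U_{n-2}$, or $W_{n-2}$ is forced: the clone relations together with the requirement that $a$ and $b$ each break the other's homogeneous set must leave exactly one gluing, and that gluing must preserve primality of the whole. Organizing this case analysis so that the three families are handled uniformly, rather than through a proliferation of ad hoc subcases, is the principal difficulty, and is precisely what makes this theorem substantially deeper than the mere existence of a smaller prime subtournament.
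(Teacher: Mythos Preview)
The paper does not prove this theorem; it is quoted from Schmerl and Trotter \cite{ST} and used as a black box. So there is no paper-proof to compare against, and I assess your plan on its own terms.

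Your setup is sound and standard: the trichotomy (extend / uniform / clone) for how an outside vertex attaches to a prime set $W$, the reduction to two extra vertices via the $(n{-}1)$-or-$(n{-}2)$ hereditary-primitivity result of \cite{ER}, the observation that criticality of $G$ forces both $a$ and $b$ to be of type (ii) or (iii), and the cross-splitting constraints between $a$ and $b$ coming from primality of $G$. The handling of $n=6$ is also correct.

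The genuine gap is the inductive step. You assert that the pair $\{a,b\}$ can be chosen so that $W = G - a - b$ is not merely prime but itself \emph{critical}, so that the inductive hypothesis identifies $W$ as $T_{n-2}$, $U_{n-2}$, or $W_{n-2}$. You flag this as part of the ``main obstacle'' but give no argument, and none is apparent. The danger is circularity: once one already knows $G \in \{T_n,U_n,W_n\}$ it is easy to check that some prime $(n{-}2)$-vertex subtournament is again critical, but to run your induction you must prove this for an \emph{arbitrary} critical $G$ before the classification is in hand. A priori, Ehrenfeucht--Rozenberg only hands you a \emph{prime} $W$, and nothing in your argument upgrades it to critical. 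In effect you would need an independent lemma ``every critical tournament on $n\ge 7$ vertices contains a critical tournament on $n-2$ vertices,'' and that lemma is essentially as deep as the theorem itself. The original Schmerl--Trotter argument (and the later reworkings) avoid this trap: they carry out the clone/uniform case analysis of $a$ and $b$ against a merely \emph{prime} $W$ and extract the global linear structure directly, without ever assuming inductively that $W$ is critical. Your reattachment analysis in the final paragraph is the right kind of work, but it should be done from a prime $W$, not a critical one; as written, the induction does not close.
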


In \cite{Li}, the author proved the following strengthening of Theorem~\ref{critical}.

\begin{thm}[\cite{Li}]\label{grow}
Let $G$ be a prime tournament which is not $T_n$, $U_n$, or $W_n$ for any odd $n$, and let $H$ be a prime subtournament of $G$ with $5 \le |V(H)| < |V(G)|$. Then there exists a prime subtournament of $G$ with $|V(H)|+1$ vertices that has a subtournament isomorphic to $H$.
\end{thm}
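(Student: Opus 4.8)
The plan is to argue by a minimal counterexample, using the Schmerl--Trotter theorem (Theorem~\ref{critical}) as the engine of an induction on $|V(G)|$, together with one structural lemma describing exactly when a prime subtournament can be enlarged by a single vertex. First I would prove this \emph{extension lemma}: if $H=G[W]$ is prime with $|W|\ge 3$ and $z\in V(G)\minus W$, then $G[W\cup\{z\}]$ is prime if and only if $z$ is not uniform over $W$ (that is, neither $z\Ra W$ nor $W\Ra z$) and there is no $u\in W$ with $A_G(z)\cap(W\minus\{u\})=A_G(u)\cap(W\minus\{u\})$; in the exceptional second case I call $z$ a \emph{twin} of $u$. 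The proof is a classification of the nontrivial homogeneous sets $M$ of $G[W\cup\{z\}]$: if $z\notin M$ then primeness of $H$ forces $M=W$, which is homogeneous precisely when $z$ is uniform; and if $z\in M$ then primeness of $H$ forces $|M\minus\{z\}|\le 1$, so $M=\{z,u\}$, which is homogeneous precisely when $z$ is a twin of $u$. Two facts fall out that I will use repeatedly: a vertex has at most one twin in $W$ (two twins would be a homogeneous pair in $H$), and if $z$ is a twin of $u$ then $(W\minus\{u\})\cup\{z\}$ is another copy of $H$.

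Now suppose the theorem fails and choose a counterexample $(G,H)$ with $|V(G)|$ minimum; write $W=V(H)$ and $k=|W|\ge 5$. Failure is the statement $(\star)$: for every copy $W^{*}$ of $H$ in $G$ and every $z\notin W^{*}$, the tournament $G[W^{*}\cup\{z\}]$ is not prime; equivalently, by the extension lemma, every vertex outside a copy of $H$ is uniform or a twin with respect to that copy. If $k=|V(G)|-1$ then $G$ itself is the required extension, so $|V(G)|\ge k+2\ge 7$, and Theorem~\ref{critical} supplies a \emph{removable} vertex $v$ with $G-v$ prime. The crucial point driving the induction is that $(\star)$ is inherited: a copy of $H$ in $G-v$ is a copy in $G$, so it has no one-vertex extension inside $G-v$ either. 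Hence, if there is a removable $v$ for which $G-v$ is not critical and some copy of $H$ avoids $v$, then $(G-v,H)$ is a strictly smaller counterexample, a contradiction. Therefore in the minimal counterexample every removable vertex must either lie in every copy of $H$ or leave a critical tournament $G-v$.

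It remains to show that this rigidity cannot occur for a non-critical $G$. I would first dispose of the boundary case $|V(G)|=k+2$, where $V(G)\minus W=\{x,y\}$. Here $(\star)$ makes $G-x$ and $G-y$ non-prime, so $x$ and $y$ are each uniform or a twin over $W$; and since $G$ is non-critical there is a removable $v$, necessarily in $W$. The key observation is that $G-v$ is a prime tournament on $k+1$ vertices, and deleting $x$ (respectively $y$) from it yields a copy of $H$ exactly when $y$ (respectively $x$) is a twin of $v$; so if $x$ or $y$ is a twin of any removable $v$, then $G-v$ already contains a copy of $H$ and we are done. The remaining configurations force strong incidence relations between $x$, $y$, their (unique) twin targets, and the removable vertices, which combined with the primeness of $G$ I expect to pin $G$ down to $T_{k+2}$, $U_{k+2}$, or $W_{k+2}$, contradicting non-criticality. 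For the general case $|V(G)|\ge k+3$, the surviving possibilities from the previous paragraph are that no external vertex is removable, or that every removable deletion is critical; in the first situation every external vertex is a module (uniform or a twin) attached to the prime set $W$, and I would show such a $G$ cannot be prime unless it collapses to a critical tournament, while in the second I would use that inside a critical tournament every prime subtournament has odd order --- so the extending vertex can never come from the critical part, forcing it to come from $v$ and reducing again to the local twin/uniform analysis.

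The main obstacle is exactly this last rigidity step: converting the purely local information ``every vertex outside every copy of $H$ is uniform or a twin of a unique vertex of that copy'' into the global conclusion that $G$ is one of $T_n$, $U_n$, $W_n$. I expect the hypothesis $|V(H)|\ge 5$ to be essential here, since it guarantees that $H$ is rigid enough that its twin classes and uniform classes cannot be permuted, and I anticipate that the boundary case $|V(G)|=|V(H)|+2$ --- where only the two external vertices $x,y$ are available and the argument becomes a finite analysis of how they attach to $W$ --- will require the most care.
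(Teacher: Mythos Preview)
The paper does not prove Theorem~\ref{grow}; it is quoted from \cite{Li} without argument, so there is no in-paper proof to compare your proposal against. I can only assess the proposal on its own merits.

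Your extension lemma is correct and is the right first move: for prime $H=G[W]$ with $|W|\ge 3$, the only obstructions to $G[W\cup\{z\}]$ being prime are that $z$ is uniform over $W$ or a twin of a unique $u\in W$, and swapping in a twin yields another copy of $H$. The genuine gap is the rigidity step, and the inductive route through Theorem~\ref{critical} that you propose does not obviously close it. Two branches are left unfinished. In the boundary case $|V(G)|=k+2$ you stop at ``I expect to pin $G$ down to a critical tournament''; but that expectation \emph{is} the content of the theorem in this case, and the configurations for how the two outside vertices attach (each uniform or a twin, twin targets equal or distinct, coinciding with a removable vertex or not, and the further possibility that $(W\setminus\{v,w\})\cup\{x,y\}$ is itself a copy of $H$) do not visibly collapse without a real argument. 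In the general case you must confront the possibility that $G-v$ is critical for \emph{every} removable $v$; then $H$ is forced to be one of $T_k$, $U_k$, $W_k$, and you would need to exhibit a specific copy of $H$ inside $G-v$ over which $v$ is neither uniform nor a twin --- a concrete statement about one-vertex extensions of critical tournaments that your sketch does not supply. Also, your dichotomy gives ``$v$ lies in every copy of $H$'', which with many copies present is much stronger than ``$v\in W$''; the phrase ``no external vertex is removable'' conflates the two.

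A cleaner route, and the one typically used for results of this type, avoids the induction on $|V(G)|$ altogether: fix one copy $W$, assume no outside vertex extends it, partition $V(G)\setminus W$ into $\{z:z\Rightarrow W\}$, $\{z:W\Rightarrow z\}$, and the twin classes $P(u)=\{z:z\text{ is a twin of }u\}$, and argue directly that the edges between these classes are forced in a way that either produces a nontrivial homogeneous set of $G$ or makes $G$ one of $T_n$, $U_n$, $W_n$. This is still work, but it is a single structural analysis rather than a recursion through removable vertices each of which may drop you into a critical tournament.
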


This theorem can be used to prove the following result, which appears with a different proof in Belkhechine and Boudabbous \cite{BB}.

\begin{thm}[Belkhechine and Boudabbous \cite{BB}]\label{T5}
Let $G$ be a prime tournament which contains $T_5$. Then either $G$ is $T_n$ for some odd $n$ or $G$ contains $U_5$ and $W_5$.
\end{thm}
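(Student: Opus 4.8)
The plan is to use Theorem~\ref{grow} to reduce the statement to a finite check about prime tournaments on six vertices. First, dispose of the small case: since $G$ contains $T_5$, we have $|V(G)| \ge 5$, and if $|V(G)| = 5$ then $G$ is one of the three prime five-vertex tournaments $T_5$, $U_5$, $W_5$; because $U_5$ and $W_5$ do not contain $T_5$ (the only prime subtournaments of $U_n$, resp.\ $W_n$, on at least three vertices are the $U_m$, resp.\ the $W_m$), in this case $G = T_5 = T_n$ and we are done. So assume from now on that $|V(G)| \ge 6$ and that $G$ is not $T_n$ for any odd $n$; the goal is to show that $G$ contains both $U_5$ and $W_5$.

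Next, I would note that $G$ is not a critical tournament: it is not $T_n$ for any odd $n$ by assumption, and it is neither $U_n$ nor $W_n$ for any odd $n$ since the only prime subtournaments of $U_n$ (resp.\ $W_n$) on at least three vertices are the $U_m$ (resp.\ the $W_m$), none of which is isomorphic to $T_5$, whereas $G$ contains $T_5$. Hence $G$ satisfies the hypotheses of Theorem~\ref{grow}, and applying that theorem to the prime subtournament $H = T_5$ (which has $5 = |V(H)| < |V(G)|$ vertices) yields a prime subtournament $H_6$ of $G$ on six vertices containing a copy of $T_5$. Since a copy of $U_5$ or $W_5$ inside $H_6$ is also inside $G$, it now suffices to prove the following finite claim: every prime tournament on six vertices that contains $T_5$ contains both $U_5$ and $W_5$.

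To prove the claim I would use that $T_5$ is the rotational five-vertex tournament, hence vertex-transitive: up to isomorphism, a prime six-vertex tournament $H$ containing $T_5$ is obtained from $T_5$ on vertices $\{v_1,\dots,v_5\}$ (with $v_i \ra v_j$ iff $j - i \equiv 1, 2 \pmod 5$) by adjoining a sixth vertex $v_6$, and the out-neighborhood of $v_6$ within $\{v_1,\dots,v_5\}$ matters only up to the cyclic automorphism group of $T_5$ and up to passing to the dual. If $v_6 \Ra \{v_1,\dots,v_5\}$ or $\{v_1,\dots,v_5\} \Ra v_6$ then $\{v_1,\dots,v_5\}$ is a nontrivial homogeneous set, which is impossible; the remaining cases collapse to a short list (out-neighborhood of size $1$, or size $2$ of either shape, plus their duals), and in each surviving case one verifies primality and then exhibits a five-element subset inducing $U_5$ and another inducing $W_5$. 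A convenient bookkeeping device here is that $T_5$, $U_5$, $W_5$ are exactly the prime five-vertex tournaments with out-degree sequences $(2,2,2,2,2)$, $(3,2,2,2,1)$, $(3,3,2,1,1)$, so one first finds a five-subset with the desired out-degree sequence and then checks that its adjacencies are correct. This last step --- the case-by-case verification over the handful of six-vertex prime extensions of $T_5$ --- is the only real obstacle; it is routine but requires care, since one must confirm primality of each candidate and then actually locate the promised induced copies of $U_5$ and $W_5$.
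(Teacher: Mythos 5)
Your proposal follows essentially the same route as the paper: reduce via Theorem~\ref{grow} to a prime six-vertex subtournament containing $T_5$, then analyze the sixth vertex's in/out-neighborhood in $T_5$ up to rotation and duality, using primeness to eliminate the case where the neighborhood is trivial and the ``consecutive pair'' case of size two. The only difference is that the paper carries out the deferred final verification explicitly by writing down the induced copies (e.g.\ for $B=\{v_1\}$, the sets $\{v_2,u,v_4,v_5,v_1\}$ and $\{v_5,v_1,u,v_2,v_3\}$ induce $U_5$ and $W_5$ respectively), which is indeed the routine step you describe.
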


\begin{proof}
Suppose $G$ is not $T_n$ for any $n$. In particular, since $G$ contains $T_5$, $|V(G)| \ge 6$. Since $G$ contains $T_5$, it is not $U_n$ or $W_n$ for any $n$. So by Theorem~\ref{grow}, there exists a prime subtournament $H$ of $G$, $|V(H)| = 6$, that contains $T_5$. Let $V(H) = \{u,v_1,\dotsc,v_5\}$, where $v_i \ra v_j$ if $j-i \equiv 1$ or $2 \pmod{5}$.

Let $B = B_H(u)$ and $A = A_H(u)$. By taking the dual of $G$ if necessary (we can do this because $T_5$, $U_5$, and $W_5$ are isomorphic to their own duals), we can assume $|B| \le 2$. 
If $|B| = 0$, then $u \Ra V(H) \minus \{u\}$, so $V(H) \minus \{u\}$ is a homogeneous set in $H$, contradicting the primeness of $H$. 
Suppose $|B| = 1$. Without loss of generality, assume $v_1 \in B$ and $v_2$, \dots, $v_5 \in A$. Then the tournament induced on $\{v_2,u,v_4,v_5,v_1\}$ is isomorphic to $U_5$, and the tournament induced on $\{v_5,v_1,u,v_2,v_3\}$ is isomorphic to $W_5$, as desired.

Finally, suppose $|B| = 2$. We either have $v_i$, $v_{i+1} \in B$ for some $i$ or $v_i$, $v_{i+2} \in B$ for some $i$, where the indices are taken modulo 5. Suppose we have the former case; without loss of generality, assume $i=1$. Then $\{u,v_3\}$ is a homogeneous set in $H$, which contradicts the primeness of $H$. So we must have $v_i$, $v_{i+2} \in B$ for some $i$; without loss of generality, assume $i=1$. Then the tournament induced on $\{v_2,v_1,v_3,u,v_5\}$ is isomorphic to $U_5$, and the tournament induced on $\{v_2,v_3,u,v_4,v_5\}$ is isomorphic to $W_5$.
\end{proof}

Theorem~\ref{T5} implies that to prove Theorem~\ref{noU}, it suffices to prove the following.

\begin{thm}\label{noTU}
Let $G$ be a prime tournament. Then $G$ is $T_5$-free and $U_5$-free if and only if $V(G)$ can be partitioned into sets $X$, $Y$, $Z$ such that $X \cup Y$, $Y \cup Z$, and $Z \cup X$ are transitive.
\end{thm}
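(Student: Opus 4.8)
For the ``if'' direction it is convenient first to reformulate the partition condition: for a partition $V(G)=X\sqcup Y\sqcup Z$, the three unions $X\cup Y$, $Y\cup Z$, $Z\cup X$ are all transitive if and only if every cyclic triangle of $G$ has exactly one vertex in each part (a union is transitive iff it contains no cyclic triangle; a cyclic triangle inside a two-part union has two vertices in one part; and conversely a cyclic triangle with one vertex per part lies in no two-part union). Now suppose such a partition exists and $G$ contained $T_5$ or $U_5$ on a five-element set $S$. Since $|S\cap X|+|S\cap Y|+|S\cap Z|=5$, two of the parts together contain at least four vertices of $S$; these lie in a transitive union, hence induce a copy of $I_4$. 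But one checks directly that every four-vertex subtournament of $T_5$, and of $U_5$, contains a cyclic triangle, so neither contains $I_4$ --- a contradiction. Hence $G$ is $T_5$- and $U_5$-free; note this direction does not use primeness.

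For the ``only if'' direction I would induct on $n=|V(G)|$. If $n\le 3$ the partition is easy (for $n\le 2$ every partition works, as each two-part union has at most two vertices; for $n=3$, $G$ is a cyclic triangle and three singletons work); there is no prime tournament on four vertices; and for $n=5$ the only prime $T_5$- and $U_5$-free tournament is $W_5$. More generally, for $W_n$ a partition is $X=\{w_i:i\text{ odd}\}$, $Y=\{w_i:i\text{ even}\}$, $Z=\{v\}$: then $X\cup Y$ is the transitive chain $w_1,\dots,w_{n-1}$, while $v$ is a source of $X\cup Z$ and a sink of $Y\cup Z$. So assume $n\ge 6$; if $G=W_n$ we are done. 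Otherwise $G$ is none of $T_m$, $U_m$, $W_m$ for odd $m$ (a copy of $T_m$ or $U_m$ would contain $T_5$ or $U_5$, and $W_n$ is excluded by hypothesis while other $W_m$ have the wrong number of vertices), so Theorem~\ref{critical} provides a vertex $v$ with $H:=G[V(G)\setminus\{v\}]$ prime. As $H$ is $T_5$- and $U_5$-free with $n-1\ge 5$ vertices, induction gives a partition $V(H)=X'\sqcup Y'\sqcup Z'$ with all three pairwise unions transitive.

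The remaining step, putting $v$ back, is where the work lies and is the step I expect to be hardest. The restriction to $V(H)$ of any valid partition of $G$ is a valid partition of $H$, but the converse fails --- adding a vertex can destroy the transitivity of a union --- so one may not simply place $v$ into one of $X',Y',Z'$; the partition of $H$ may have to be rebuilt. Write $A$ and $B$ for the successors and predecessors of $v$ inside $H$. The cyclic triangles of $G$ are exactly those of $H$ together with the triangles $\{v,p,q\}$ with $p\in A$, $q\in B$, $p\ra q$; so, by the reformulation above, a valid partition of $G$ amounts to a valid partition $X_0\sqcup Y_0\sqcup Z_0$ of $H$ together with a choice of part for $v$, say $Z$, such that every pair $\{p,q\}$ of that type has one endpoint in $X_0$ and the other in $Y_0$. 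Proving that some such re-partition exists is the crux: it is false for a general prime $H$, and this is precisely where primeness and $T_5$-, $U_5$-freeness of $G$ enter. The plan is to analyze how $v$ sits relative to the three transitive orders on $X'\cup Y'$, $Y'\cup Z'$, $Z'\cup X'$ --- that is, how $A$ and $B$ interleave with the three parts --- and to show that forbidding $T_5$ and $U_5$ (together with primeness) forces this interleaving into a short list of configurations, in each of which the required re-partition can be exhibited by moving a controlled set of vertices between parts. The structural restrictions needed here are presumably the content of the preliminary lemmas of Section~\ref{preliminaries}; establishing them, and then carrying out the configuration-by-configuration bookkeeping, is the main obstacle.
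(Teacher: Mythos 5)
Your ``if'' direction is complete and is the same argument as the paper's (any five vertices meet some two of the three parts in at least four vertices, hence contain an $I_4$, which $T_5$ and $U_5$ do not). Your inductive setup for the ``only if'' direction also matches the paper exactly: base cases up to $W_5$, the $W_n$ case handled directly, and Theorem~\ref{critical} used to find a prime $H = G - v$ to which the inductive hypothesis applies. But the entire substance of the proof --- showing that a valid partition of $G$ can be obtained from one of $H$ --- is exactly the part you defer with ``the plan is to analyze\dots'' and ``is the main obstacle.'' That step is not a bookkeeping afterthought; it is most of the paper's Sections~\ref{preliminaries} and~\ref{mainproof}, and as written your proposal does not prove the theorem.

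For comparison, here is what the paper does there, and where your expectations diverge from it. You anticipate that the partition of $H$ ``may have to be rebuilt'' by moving vertices between parts; in fact the paper proves the stronger and cleaner statement that $v$ can always be \emph{inserted into one of the three existing parts} (after possibly dualizing $G$), so that $\{X\cup\{v\},Y,Z\}$ works with $X$, $Y$, $Z$ unchanged. The mechanism is: Proposition~\ref{sequence} produces a chain of cyclic triangles $C_1,\dots,C_{n-3}$ covering $V(H)$, each obtained from the previous by advancing one coordinate in one of the three transitive orders; primeness of $G$ forces some $C_{r}$ to be split by $v$ (otherwise $V(H)$ would be homogeneous in $G$); and Proposition~\ref{lemma} --- five small consequences of $T_5$- and $U_5$-freeness about how $v$ can attach to two cyclic triangles sharing an edge --- is then propagated up and down the chain to show that $B_G(v)$ and $A_G(v)$ cut each of the three transitive orders into an initial and a final segment, at positions compatible (via Proposition~\ref{twotransitive}) with placing $v$ at position $i_0$ in $X$. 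Primeness of $H$ is also needed once more inside this propagation (Claim~\ref{up}) to rule out $\{x_i,x_{i+1}\}$ becoming homogeneous. None of this is routine, and your proposal contains neither the chain-of-triangles device nor the local lemmas, so the gap is genuine.
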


To conclude this section, we state a forbidden-subtournament result different from ours. In \cite{La}, Latka characterized all the $W_5$-free tournaments. To state the theorem, we define $Q_7$ to be the Paley tournament on 7 vertices; that is, the tournament with vertices $v_1$, \dots, $v_7$ such that $v_i \ra v_j$ if $j-i$ is a quadratic residue modulo 7. Let $Q_7 - v$ be the tournament obtained from $Q_7$ by deleting a vertex.

\begin{thm}[Latka \cite{La}]\label{noW}
A prime tournament is $W_5$-free if and only if it is isomorphic to one of $I_0$, $I_2$, $Q_7 - v$, $Q_7$, $T_n$, or $U_n$ for some some odd $n \ge 1$. 
\end{thm}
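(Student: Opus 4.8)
I would verify that each listed tournament is prime and $W_5$-free. The tournaments $I_0$ and $I_2$ have at most two vertices, so they are prime and trivially contain no five-vertex subtournament. The tournaments $T_n$ and $U_n$ are prime, and by the remark following Definition~\ref{TUW} their only prime subtournaments on at least three vertices are the $T_m$ and the $U_m$ respectively; since $W_5$ is a prime five-vertex tournament distinct from $T_5$ and $U_5$, neither $T_n$ nor $U_n$ contains $W_5$. It remains to check by hand that $Q_7$ and $Q_7 - v$ are prime and $W_5$-free; here I would exploit the fact that the automorphism group of the Paley tournament $Q_7$ acts transitively on vertices and on arcs, reducing the verification to a few representative five-subsets, and I would record that $Q_7$ is self-dual via multiplication by a quadratic non-residue modulo $7$ (which also makes $Q_7 - v$ self-dual).

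\textbf{Forward direction, reductions.} Let $G$ be prime and $W_5$-free. I first dispose of $|V(G)| \le 5$ directly: the prime tournaments on at most five vertices are $I_0$, $I_1 = T_1$, $I_2$, $T_3 = U_3$, $T_5$, $U_5$, and $W_5$, and discarding $W_5$ leaves only tournaments on the list. For $|V(G)| \ge 6$ I split on whether $G$ is critical. If $G$ is one of $T_n$, $U_n$, $W_n$, then $W_n$ is not $W_5$-free for any odd $n \ge 5$ (it contains $W_5$), so $G$ is $T_n$ or $U_n$, both on the list. So assume $G$ is prime, $W_5$-free, non-critical, with $|V(G)| \ge 6$. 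The key opening step is that $G$ is then $T_5$-free: if $G$ contained $T_5$, then by Theorem~\ref{T5} either $G = T_n$ (impossible, $G$ is non-critical) or $G$ contains $W_5$ (impossible). Since a prime tournament on at least five vertices contains $T_5$, $U_5$, or $W_5$ \cite{ER}, it follows that $G$ contains $U_5$.

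\textbf{Forward direction, the growth chain.} Because $G$ is non-critical, Theorem~\ref{grow} lets me enlarge a prime subtournament one vertex at a time. Starting from a copy of $U_5$, I obtain a chain $U_5 = H_5 \subset H_6 \subset \dots \subset H_{|V(G)|} = G$ of prime subtournaments, each of which is $T_5$-free and $W_5$-free. The proof then reduces to three finite lemmas governing this chain: (i) the unique prime $T_5$-free, $W_5$-free tournament on six vertices is $Q_7 - v$; (ii) the only prime $T_5$-free, $W_5$-free tournament on seven vertices containing $Q_7 - v$ is $Q_7$ (consistent with the fact that $U_7$, whose only prime subtournaments are the $U_m$, cannot contain the prime six-vertex tournament $Q_7 - v$); and (iii) $Q_7$ admits no extension to a prime $T_5$-free, $W_5$-free tournament on eight vertices. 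Combining these: $H_6$ must be $Q_7 - v$, so if $|V(G)| \ge 7$ then $H_7 = Q_7$, and then (iii) forces $|V(G)| \le 7$. Hence $G$ is $Q_7 - v$ or $Q_7$, completing the forward direction.

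\textbf{The main obstacle.} Each of the three lemmas is a bounded case analysis of the form ``add one vertex $u$ to a fixed small tournament $S$ and determine which back-sets $B \subseteq V(S)$ yield a prime, $T_5$-free, $W_5$-free result.'' For (i) I take $S = U_5$: since every prime six-vertex $T_5$-free $W_5$-free tournament contains $U_5$, I enumerate the choices of $B$, using the automorphisms and self-duality of $U_5$ to assume $|B| \le 2$ and to cut the number of orbits, discarding any $B$ that creates a homogeneous set, a $T_5$, or a $W_5$. For (ii) and (iii) I take $S = Q_7 - v$ and $S = Q_7$; the large, vertex- and arc-transitive automorphism group and self-duality keep the number of cases small, allowing me to assume $|B| \le 3$. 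I expect step (iii)---showing that $Q_7$ is a genuinely maximal non-critical example---to be the main obstacle, since it is what establishes finiteness of the non-critical family and requires checking every way of attaching a new vertex to $Q_7$. Its payoff is that, together with the growth chain, a single non-extendability check at the top replaces any open-ended induction on $|V(G)|$.
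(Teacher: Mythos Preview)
The paper does not prove this theorem; it merely states Latka's result with a citation to \cite{La}, so there is no in-paper proof to compare against. Your proposal is therefore being judged on its own merits.

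Your strategy is sound and, interestingly, is \emph{not} Latka's original argument: Latka's 2003 paper predates both Theorem~\ref{grow} and Theorem~\ref{T5}, so her proof could not have used the growth chain you invoke. What you are really doing is showing that the later machinery of this paper (and of \cite{Li}, \cite{BB}) gives a streamlined route to Latka's theorem. The reductions are correct: for non-critical prime $W_5$-free $G$ with $|V(G)|\ge 6$, Theorem~\ref{T5} forces $T_5$-freeness, Ehrenfeucht--Rozenberg then forces a copy of $U_5$, and Theorem~\ref{grow} (applied repeatedly with the same ambient $G$) yields the chain $H_5\subset H_6\subset\dots\subset G$ of prime, $T_5$-free, $W_5$-free subtournaments. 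Your three finite lemmas are the right checkpoints, and your remark that $U_7$ cannot appear as $H_7$ because $U_7$ has no prime six-vertex subtournament is exactly the consistency check needed for lemma~(ii).

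The only caution is that the ``finite'' lemmas still constitute the bulk of the work, and you have described rather than executed them. In particular, lemma~(i) requires ruling out all one-vertex extensions of $U_5$ other than $Q_7-v$; the automorphism group of $U_5$ has order only $2$, so the orbit reduction is less dramatic than for $Q_7$, and you should expect a handful of genuine cases. Lemma~(iii) is, as you say, the crux: the arc-transitivity of $Q_7$ cuts the back-sets $B$ to a manageable list, but you must still exhibit, for each surviving $B$, either a nontrivial homogeneous set or an explicit copy of $T_5$ or $W_5$. None of this is conceptually hard, but it is where a careless omission would leave a gap.
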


\section{Examples}\label{examples}

We now give some examples of families of prime $U_5$-free tournaments. First, we have that $T_n$ is $U_5$-free for all odd $n \ge 1$; this obviously agrees with Theorem~\ref{noU}.

The tournament $W_n$ is $U_5$-free for for all odd $n \ge 1$. To see that this agrees with Theorem~\ref{noU}, let $V(W_n) = \{v,w_1,\dotsc,w_{n-1}\}$ as in Definition~\ref{TUW}, and let $X = \{v\}$, $Y = \{w_i : i \text{ odd}\}$, and $Z = \{w_i : i \text{ even}\}$. Then $X$, $Y$, and $Z$ partition $V(W_n)$, and $X \cup Y$, $Y \cup Z$, and $Z \cup X$ are transitive, as desired.

Finally, let $P_n$ be the tournament with vertices $v_1$, \dots, $v_n$ such that $v_i \ra v_j$ if $j - i \ge 2$, and $v_{i+1} \ra v_i$ for all $1 \le i < n$. Then $P_n$ is prime for all $n \neq 4$, and $P_n$ is $U_5$-free for all $n$. Let $X = \{v_i : i \equiv 0 \pmod{3}\}$, $Y = \{v_i : i \equiv 1 \pmod{3}\}$, and $Z = \{v_i : i \equiv 2 \pmod{3}\}$. Then $X$, $Y$, and $Z$ partition $V(P_n)$, and $X \cup Y$, $Y \cup Z$, and $Z \cup X$ are transitive.

\section{Preliminaries}\label{preliminaries}

Before proving the main theorem, we establish some facts that will aid us in the proof.

\begin{prop}\label{twotransitive}
Let $G$ be a tournament, and let $X$, $Y \subseteq V(G)$ be disjoint transitive sets. Let $X = \{x_1,\dotsc,x_\ell\}$ and $Y = \{y_1,\dotsc,y_m\}$, where $x_i \ra x_j$ if $i < j$ and $y_i \ra y_j$ if $i < j$. Suppose that for each $x_i \in X$, there is an integer $1 \le s_i \le m+1$ such that $\{y_j : j < s_i\} \Ra x_i \Ra \{y_j : j \ge s_i\}$. Then $X \cup Y$ is transitive if and only if $s_1 \le s_2 \le \dotsc \le s_\ell$.
\end{prop}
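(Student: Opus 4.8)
The plan is to prove both directions by reasoning about cyclic triangles. Recall that a tournament is transitive if and only if it has no cyclic triangle, so $X \cup Y$ is transitive exactly when there is no cyclic triangle with vertices in $X \cup Y$. Since $X$ and $Y$ are each transitive, any cyclic triangle in $X \cup Y$ must use either two vertices of $X$ and one of $Y$, or one of $X$ and two of $Y$; I will analyze these two shapes of triangle and show each one exists precisely when the sequence $s_1, \dotsc, s_\ell$ fails to be nondecreasing.

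First I would do the forward direction (contrapositive): suppose $s_i > s_{i+1}$ for some $i$ — it suffices to consider consecutive indices, since if the sequence is not monotone there is a consecutive descent. Pick any index $j$ with $s_{i+1} \le j < s_i$; such a $j$ exists because $s_{i+1} \le s_i - 1$. Then from $j < s_i$ we get $y_j \ra x_i$, from $j \ge s_{i+1}$ we get $x_{i+1} \ra y_j$, and from $i < i+1$ we get $x_i \ra x_{i+1}$. Hence $\{x_i, x_{i+1}, y_j\}$ is a cyclic triangle, so $X \cup Y$ is not transitive.

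For the converse, assume $s_1 \le s_2 \le \dotsc \le s_\ell$ and suppose for contradiction that $X \cup Y$ contains a cyclic triangle; by the remark above it has one of the two mixed forms. Consider a triangle on $\{x_a, x_b, y_c\}$ with $a < b$: since $x_a \ra x_b$, for the triangle to be cyclic we need $x_b \ra y_c \ra x_a$, i.e. $c \ge s_b$ and $c < s_a$, forcing $s_a > s_b$, which contradicts monotonicity. The case of a triangle on $\{x_a, y_b, y_c\}$ with $b < c$ is symmetric: since $y_b \ra y_c$, cyclicity forces $y_c \ra x_a \ra y_b$, i.e. $c < s_a$ and $b \ge s_a$, which is impossible since $b < c$. (This second case in fact never occurs and does not even need the hypothesis; it is really just the statement that the position of $x_a$ among the $y$'s is well-defined.) Either way we reach a contradiction, so $X \cup Y$ is transitive.

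The argument is essentially routine once the right case split is set up; the only point requiring a little care is the reduction in the forward direction to a consecutive descent $s_i > s_{i+1}$ and the verification that the interval $[s_{i+1}, s_i)$ of valid indices $j$ is nonempty — but this is immediate from $s_{i+1} \le s_i - 1$. I do not anticipate a genuine obstacle; the main thing is to be careful with the inequalities defining $s_i$ (the half-open convention $\{y_j : j < s_i\} \Ra x_i \Ra \{y_j : j \ge s_i\}$) so that the directions of all edges in each triangle are pinned down correctly.
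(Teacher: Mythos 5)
Your proof is correct: the paper simply states that this proposition is clear and omits the proof entirely, so there is nothing to compare against. Your case analysis of the two mixed triangle types $\{x_a,x_b,y_c\}$ and $\{x_a,y_b,y_c\}$, together with the reduction to a consecutive descent $s_i > s_{i+1}$ and the check that an index $j$ with $s_{i+1} \le j < s_i$ exists and satisfies $1 \le j \le m$, is exactly the routine verification the author had in mind.
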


Proposition~\ref{twotransitive} is clear; we omit the proof.

\begin{prop}\label{sequence}
Let $G$ be a strongly connected tournament, $|V(G)|  \ge 3$, such that $V(G)$ can be partitioned into sets $X$, $Y$, $Z$ such that $X \cup Y$, $Y \cup Z$, and $Z \cup X$ are transitive. Let $X = \{x_1,\dotsc,x_\ell\}$, $Y = \{y_1,\dotsc,y_m\}$, and $Z = \{z_1,\ldots,z_n\}$, where $x_i \ra x_j$ if $i < j$, and similarly for $Y$ and $Z$. Then there exists a sequence $C_1$, $C_2$, \dots, $C_{|V(G)|-2}$ of cyclic triangles such that
$C_1 = \{x_1,y_1,z_1\}$
and if $C_r = \{x_i,y_j,z_k\}$, $1 \le r < |V(G)| - 2$, then $C_{r+1} = \{x_{i+1},y_j,z_k\}$, $\{x_i,y_{j+1},z_k\}$, or $\{x_i,y_j,z_{k+1}\}$.
\end{prop}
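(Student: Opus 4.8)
The plan is to construct the sequence greedily. Since $|V(G)| = \ell+m+n$ and the quantity $i+j+k$ increases by exactly $1$ at each allowed step (from $3$ at $C_1$ up to at most $\ell+m+n$), it suffices to prove two things: (i) $\{x_1,y_1,z_1\}$ is a cyclic triangle; and (ii) every cyclic triangle $\{x_i,y_j,z_k\}$ with $(i,j,k)\neq(\ell,m,n)$ has an \emph{in-range neighbour} --- one of $\{x_{i+1},y_j,z_k\}$, $\{x_i,y_{j+1},z_k\}$, $\{x_i,y_j,z_{k+1}\}$ with the incremented index at most $\ell$, $m$, $n$ respectively --- that is again a cyclic triangle. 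Granting these, we extend from $C_1$ until $i+j+k=\ell+m+n$; the last triangle is then forced to be $\{x_\ell,y_m,z_n\}$ and the sequence has exactly $\ell+m+n-2=|V(G)|-2$ terms. For bookkeeping I would record, for each $x_i$, the threshold $\alpha_i$ with $\{y_1,\dots,y_{\alpha_i-1}\}\Ra x_i\Ra\{y_{\alpha_i},\dots,y_m\}$ (it exists because the transitive order on $X\cup Y$ restricts to the given orders on $X$ and on $Y$), and define $\gamma_j$, $\delta_k$ analogously for $y_j$ into $Z$ and for $z_k$ into $X$; by Proposition~\ref{twotransitive} the sequences $(\alpha_i)$, $(\gamma_j)$, $(\delta_k)$ are nondecreasing. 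Thus $x_i\ra y_j\iff j\geq\alpha_i$, $y_j\ra z_k\iff k\geq\gamma_j$, $z_k\ra x_i\iff i\geq\delta_k$, and $\{x_i,y_j,z_k\}$ is a cyclic triangle exactly when the three bits $[j\geq\alpha_i]$, $[k\geq\gamma_j]$, $[i\geq\delta_k]$ all agree --- equivalently, when $x_i\ra y_j\ra z_k\ra x_i$ or $x_i\ra z_k\ra y_j\ra x_i$.

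For (i): if $\{x_1,y_1,z_1\}$ were transitive it would have a source, which by cyclically relabelling $X,Y,Z$ we may take to be $x_1$. Then $x_1\ra y_1$ and $x_1\ra z_1$, so (using $x_1\Ra X\minus\{x_1\}$, the transitivity of $X\cup Y$, and the transitivity of $Z\cup X$) we get $x_1\Ra V(G)\minus\{x_1\}$, making $x_1$ a source of $G$ --- impossible, since $G$ is strongly connected with $|V(G)|\geq3$. (The dual argument shows $\{x_\ell,y_m,z_n\}$ is a cyclic triangle too, which is the endpoint the greedy process lands on.)

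Step (ii) is the crux, and I would prove it by contradiction via a cut. Suppose $\{x_i,y_j,z_k\}$ is a cyclic triangle, $(i,j,k)\neq(\ell,m,n)$, but none of the in-range neighbours is cyclic. Using the criterion above together with the monotonicity of $(\alpha_i),(\gamma_j),(\delta_k)$, one checks --- handling the two possible cyclic orientations of $\{x_i,y_j,z_k\}$ separately, though the conclusion is the same --- that this failure forces all of the edges $x_i\ra y_{j+1}$, $x_i\ra z_{k+1}$, $y_j\ra x_{i+1}$, $y_j\ra z_{k+1}$, $z_k\ra x_{i+1}$, $z_k\ra y_{j+1}$ (those whose endpoints actually exist): in each orientation, three of these come directly from the ``no cyclic neighbour'' hypothesis, and the other three follow from the cyclic orientation plus one adjacency inside $X$, $Y$, or $Z$. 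Set $L=\{x_1,\dots,x_i\}\cup\{y_1,\dots,y_j\}\cup\{z_1,\dots,z_k\}$ and $U=V(G)\minus L$; both are nonempty. Propagating each of the six edges through the threshold monotonicity (for instance $x_i\ra y_{j+1}$ gives $j+1\geq\alpha_i\geq\alpha_a$ for every $a\leq i$, hence $x_a\ra y_b$ whenever $a\leq i$ and $b>j$), and using that $X$, $Y$, $Z$ are transitively ordered, one finds that every edge between $L$ and $U$ is directed from $L$ to $U$; so $L\Ra U$, contradicting the strong connectedness of $G$. The only genuine work is this bookkeeping in (ii) --- verifying that the failure of all three increments produces those six edges in both orientations and that they propagate to $L\Ra U$; I expect it to be short once one exploits the $3$-fold cyclic symmetry among $X,Y,Z$ (and, if convenient, passes to the dual of $G$, which interchanges the two orientations), so that just a couple of representative cases need to be written out.
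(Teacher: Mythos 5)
Your proposal is correct and follows essentially the same route as the paper: reduce to showing $\{x_1,y_1,z_1\}$ is cyclic (via the source argument) plus a one-step extension claim, and prove the latter by contradiction by deriving exactly those six forced edges and propagating them through the transitivity of $X\cup Y$, $Y\cup Z$, $Z\cup X$ to get a dominating lower set $L\Ra U$, contradicting strong connectivity. The only cosmetic differences are your explicit threshold notation and that the paper disposes of the two cyclic orientations by a without-loss-of-generality relabelling of $Y$ and $Z$ rather than treating them separately.
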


Note that if $C_1$, \dots, $C_{|V(G)|-2}$ is such a sequence, then $V(G) = C_1 \cup \dotsb \cup C_{|V(G)|-2}$.

\begin{proof}
First, note that if any of $X$, $Y$, or $Z$ are empty, then $G$ is transitive, which contradicts the fact that $G$ is strongly connected since $|V(G)| \ge 3$. So $X$, $Y$, and $Z$ are all nonempty.

It suffices to show that $\{x_1,y_1,z_1\}$ is a cyclic triangle, and if $\{x_i,y_j,z_k\}$ is a cyclic triangle and $i+j+k < |V(G)|$, then one of $\{x_{i+1},y_j,z_k\}$, $\{x_i,y_{j+1},z_k\}$, or $\{x_i,y_j,z_{k+1}\}$ exists (e.g., $\{x_{i+1},y_j,z_k\}$ exists if and only if $i < \ell$) and is a cyclic triangle.
First, suppose that $\{x_1,y_1,z_1\}$ is not a cyclic triangle. Then one of the vertices in $\{x_1,y_1,z_1\}$ is a predecessor of the other two; without loss of generality, assume $x_1 \Ra \{y_1,z_1\}$. Then since $X \cup Y$ and $X \cup Z$ are transitive, we have $x_1 \Ra V(G) \minus \{x\}$. This contradicts the strong connectivity of $G$. So $\{x_1,y_1,z_1\}$ is a cyclic triangle.

Finally, suppose that $\{x_i,y_j,z_k\}$ is a cyclic triangle and $i+j+k < |V(G)|$. Without loss of generality, assume $x_i \ra y_j \ra z_k \ra x_i$. Suppose that none of $\{x_{i+1},y_j,z_k\}$, $\{x_i,y_{j+1},z_k\}$, and $\{x_i,y_j,z_{k+1}\}$ are cyclic triangles (if they exist). If $i < \ell$, then since $z_k \ra x_i$ and $Z \cup X$ is transitive, we have $z_k \ra x_{i+1}$. Then since $\{x_{i+1},y_j,z_k\}$ is not a cyclic triangle and $y_j \ra z_k$, we have $y_j \ra x_{i+1}$ as well. So $\{x_i,y_j,z_k\} \Ra \{x_{i+1}\}$. Since $X \cup Y$ and $X \cup Z$ are transitive, we thus have
\[
\{x_{i^\prime} : i^\prime \le i\} \cup \{y_{j^\prime} : j^\prime \le j\} \cup \{z_{k^\prime} : k^\prime \le k\} \Ra \{x_{i^\prime} : i^\prime > i\}.
\]
The above statement also holds if $i=\ell$, since in that case the right side is empty.

Now, we similarly have $\{x_{i^\prime} : i^\prime \le i\} \cup \{y_{j^\prime} : j^\prime \le j\} \cup \{z_{k^\prime} : k^\prime \le k\} \Ra \{y_{j^\prime} : j^\prime > j\}$ and $\{x_{i^\prime} : i^\prime \le i\} \cup \{y_{j^\prime} : j^\prime \le j\} \cup \{z_{k^\prime} : k^\prime \le k\} \Ra \{z_{k^\prime} : k^\prime > k\}$. Thus,
\[
\{x_{i^\prime} : i^\prime \le i\} \cup \{y_{j^\prime} : j^\prime \le j\} \cup \{z_{k^\prime} : k^\prime \le k\} \Ra \{x_{i^\prime} : i^\prime > i\} \cup \{y_{j^\prime} : j^\prime > j\} \cup \{z_{k^\prime} : k^\prime > k\}.
\]
Since $i + j + k < |V(G)|$, the right side of the above statement is nonempty. This contradicts the strong connectivity of $G$. Thus, one of $\{x_{i+1},y_j,z_k\}$, $\{x_i,y_{j+1},z_k\}$, or $\{x_i,y_j,z_{k+1}\}$ is a cyclic triangle, completing the proof.
\end{proof}

\begin{prop}\label{lemma}
Let $G$ be a tournament which is $T_5$-free and $U_5$-free, and let $v \in V(G)$. Let $B = B_G(v)$ and $A = A_G(v)$. Suppose that $C = \{x,y,z\} \subseteq V(G) \minus \{v\}$ is a cyclic triangle. Then the following hold.
\renewcommand{\theenumi}{(\alph{enumi})}
\renewcommand{\labelenumi}{\theenumi}
\begin{enumerate}
\item\label{BBA} If $|C \cap A| = 1$ and there is $u \in V(G) \minus \{v\}$ such that $u \Ra C$, then $u \in B$.
\item\label{BAA} If $|C \cap A| = 2$ and there is $u \in V(G) \minus \{v\}$ such that $C \Ra u$, then $u \in A$.
\newcounter{enumisave}
\setcounter{enumisave}{\value{enumi}}
\end{enumerate}
If in addition we have $x^\prime \in V(G) \minus \{v,x,y,z\}$ such that $C^\prime = \{x^\prime,y,z\}$ is a cyclic triangle and $x \ra x^\prime$, then the following hold.
\begin{enumerate}
\setcounter{enumi}{\value{enumisave}}
\item\label{BBB} If $|C \cap A| > 0$, then $|C^\prime \cap A| > 0$.
\item\label{AAA} If $|C \cap A| = 3$, then $|C^\prime \cap A| = 3$.
\item\label{BA} If $y \ra z$ and $y \in B$, $z \in A$, then $x$ and $x^\prime$ are either both in $B$ or both in $A$.
\end{enumerate}
\end{prop}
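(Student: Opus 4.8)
All five parts will be proved the same way: assume the stated conclusion fails, and then produce five vertices of $G$ whose induced subtournament is isomorphic to $T_5$ or to $U_5$, contradicting the hypothesis. In every case these five vertices are automatically distinct (for \ref{BBA}, $u\notin C\cup\{v\}$ because $u\Ra C$; in the other parts distinctness is part of the hypotheses), and once the relevant adjacencies are pinned down the identification is a finite check. The only point needing care is to use the available symmetries --- cyclic rotation of a cyclic triangle, and interchange of two vertices playing identical roles --- so that only a single orientation of the triangle(s) has to be treated in \ref{BBA}, \ref{BBB}, \ref{AAA} and only two configurations in \ref{BA}. No structural input beyond the definitions of $T_5$, $U_5$ and of homogeneous sets is used; in particular Theorems~\ref{critical}--\ref{T5} play no role.

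For \ref{BBA}, suppose $u\in A$. Rotating $C$ and, if necessary, interchanging the two vertices of $C$ lying in $B$, we may assume $x\to y\to z\to x$ with $x,y\in B$ and $z\in A$. Then every edge on $\{v,u,x,y,z\}$ is forced: $v\to u$, $v\to z$, $x\to v$, $y\to v$, $u\to x$, $u\to y$, $u\to z$. A direct check of adjacencies (for instance, $(u,z,x,y,v)$ corresponds to $(v_2,v_1,v_3,v_4,v_5)$ in the notation of Definition~\ref{TUW}) shows this subtournament is isomorphic to $U_5$ --- a contradiction. Statement \ref{BAA} is obtained by applying \ref{BBA} to the dual tournament $G^{*}$, which is again $T_5$-free and $U_5$-free since $T_5$ and $U_5$ are self-dual: in $G^{*}$ the sets $A$ and $B$ are exchanged, a cyclic triangle stays a cyclic triangle, and ``$C\Ra u$'' becomes ``$u\Ra C$'', so \ref{BBA} yields $u\in B_{G^{*}}=A$.

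For \ref{BBB}, if the vertex of $C$ lying in $A$ is $y$ or $z$ there is nothing to prove, since then that vertex lies in $C'\cap A$; so assume $x\in A$ and $y,z\in B$, and suppose for contradiction $x'\in B$, i.e.\ $C'\cap A=\varnothing$. Interchanging $y$ and $z$ if necessary, assume $x\to y\to z\to x$; since $y\to z$ and $C'$ is a cyclic triangle we also get $x'\to y\to z\to x'$, and now all edges between $v$ and $\{x,x',y,z\}$ are determined by the hypotheses together with $x\to x'$. One checks that $\{v,x,x',y,z\}$ then induces $U_5$ --- a contradiction. Statement \ref{AAA} follows by applying \ref{BBB} to $G^{*}$ with the roles of $x$ and $x'$ interchanged (legitimate since $x\to x'$ in $G$ becomes $x'\to x$ in $G^{*}$): reading the resulting implication back in $G$ gives exactly the contrapositive of \ref{AAA}. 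Alternatively \ref{AAA} is proved directly in the same style: if $x'\in B$ while $x,y,z\in A$, then $\{v,x,x',y,z\}$ induces $U_5$.

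Finally, for \ref{BA} the hypothesis $y\to z$ forces $C$ to be $x\to y\to z\to x$, hence $C'$ to be $x'\to y\to z\to x'$. Suppose $x$ and $x'$ are not both in $B$ and not both in $A$. If $x\in A$ and $x'\in B$, then in the subtournament induced on $\{v,x,x',y,z\}$ every vertex has out-degree $2$; since the regular tournament on five vertices is unique up to isomorphism and is $T_5$, this contradicts $T_5$-freeness. If instead $x\in B$ and $x'\in A$, a direct check shows $\{v,x,x',y,z\}$ induces $U_5$, again a contradiction; hence $x$ and $x'$ lie on the same side of $v$. The only genuine labor in the whole proposition is tabulating, for each configuration, the completely forced adjacencies on five vertices and matching the outcome against $T_5$ or $U_5$; there is no real conceptual obstacle, only bookkeeping and getting the case reductions right.
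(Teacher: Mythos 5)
Your proof is correct and takes essentially the same approach as the paper: assume each conclusion fails, exhibit an induced $U_5$ or $T_5$ on $\{v\}$ together with the four relevant vertices, and obtain \ref{BAA} and \ref{AAA} from \ref{BBA} and \ref{BBB} by duality. The only cosmetic differences are that in part \ref{BA} you identify $T_5$ via the uniqueness of the regular five-vertex tournament where the paper writes down an explicit isomorphism, and that you spell out the relabeling and dualization bookkeeping in slightly more detail.
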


\begin{proof}
Assume without loss of generality that $x \ra y \ra z \ra x$. For part \ref{BBA}, assume without loss of generality that $x \in A$ and $y$, $z \in B$. Suppose $u \in V(G) \minus \{v\}$ such that $u \Ra C$. If $u \in A$, then the tournament induced on $\{x,u,y,z,v\}$ is $U_5$, a contradiction. So $u \in B$. Part \ref{BAA} follows from \ref{BBA} by taking the dual.

For parts \ref{BBB} through \ref{BA}, we have $x^\prime \ra y \ra z \ra x^\prime$. For part (c), suppose that $|C \cap A| > 0$ and $|C^\prime \cap A| = 0$. Then we must have $x \in A$ and $x^\prime$, $y$, $z \in B$. But then the tournament induced on $\{v,z,x,x^\prime,y\}$ is $U_5$, a contradiction. This proves \ref{BBB}. Part \ref{AAA} follows from the contrapositive of \ref{BBB} by taking the dual.

Finally, for part \ref{BA}, suppose $y \in B$ and $z \in A$. First, suppose that $x \in B$ and $x^\prime \in A$. Then the tournament induced on $\{x^\prime, x, y, v, z\}$ is $U_5$, a contradiction. Now suppose that $x \in A$ and $x^\prime \in B$. Then the tournament induced on $\{x, x^\prime, y, v, z\}$ is $T_5$, a contradiction. So $x$ and $x^\prime$ are either both in $B$ or both in $A$, as desired.
\end{proof}

In particular, given a sequence $C_1$, \dots, $C_{|V(G)|-2}$ of cyclic triangles as in Proposition~\ref{sequence}, we can apply Proposition~\ref{lemma}\ref{BBB}-\ref{BA} to consecutive cyclic triangles $C_r$, $C_{r+1}$. 

\section{Proof of theorem}\label{mainproof}

We now prove Theorem~\ref{noU}. Recall that by Theorem~\ref{T5}, it suffices to prove the following.

\begin{provethm}
Let $G$ be a prime tournament. Then $G$ is $T_5$-free and $U_5$-free if and only if $V(G)$ can be partitioned into sets $X$, $Y$, $Z$ such that $X \cup Y$, $Y \cup Z$, and $Z \cup X$ are transitive.
\end{provethm}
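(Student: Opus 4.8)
The backward direction—that if $V(G)$ partitions into $X$, $Y$, $Z$ with $X\cup Y$, $Y\cup Z$, $Z\cup X$ transitive then $G$ is $T_5$-free and $U_5$-free—should be the easy direction: both $T_5$ and $U_5$ are prime on $5$ vertices and strongly connected, and one checks directly that any $5$-vertex subtournament meeting such a partition in parts of sizes summing to $5$ contains a transitive set of size $3$ that is ``balanced'' in a way $T_5$ and $U_5$ forbid; alternatively one can just invoke the fact (to be verified once and for all, perhaps in Section~\ref{examples}) that $T_5$ and $U_5$ each fail to have such a three-part partition, together with the observation that having such a partition is inherited by subtournaments. So the real content is the forward direction, and I would prove it by induction on $|V(G)|$.

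For the forward direction, assume $G$ is prime, $T_5$-free and $U_5$-free. If $|V(G)|\le 3$ the partition is trivial (put everything in one part, or split a cyclic triangle into three singletons). If $|V(G)|\ge 6$, then by Theorem~\ref{T5} (or rather its contrapositive) $G$ is not $T_n$; and since $G$ is $T_5$-free it is certainly not $U_n$ or $W_n$ for $n\ge 5$, so by Theorem~\ref{critical} (Schmerl--Trotter) $G$ has a prime subtournament $H$ on $|V(G)|-1$ vertices. By induction $V(H)$ partitions into transitive-in-pairs sets $X_0,Y_0,Z_0$; let $v$ be the deleted vertex, with $B=B_G(v)$, $A=A_G(v)$. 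The plan is: since $H$ is prime and $|V(H)|\ge 5$, $H$ is strongly connected, so Proposition~\ref{sequence} gives a sequence of cyclic triangles $C_1,\dots,C_{|V(H)|-2}$ covering $V(H)$, moving one index at a time along the three transitive chains. Now I feed this sequence into Proposition~\ref{lemma}: each $C_r$ has some number $|C_r\cap A|\in\{0,1,2,3\}$ of successors of $v$, and parts \ref{BBA}--\ref{BA} constrain how $B$ and $A$ can interleave the chains $X_0$, $Y_0$, $Z_0$. The goal is to show that $A\cap X_0$ is an ``up-set'' (a suffix) of the chain $X_0$ and similarly for $Y_0$, $Z_0$—and, crucially, that the three thresholds can be chosen ``compatibly,'' i.e.\ so that merging $v$ into one of the three parts (say appending $v$ to $X_0$ at the right spot, or splitting off a new part) keeps all three pairwise unions transitive. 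Proposition~\ref{twotransitive} is the tool that converts ``$A$ meets each chain in a suffix, placed consistently'' into ``the enlarged pair is transitive.''

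Concretely, I expect the argument to run as follows. Using Proposition~\ref{lemma}\ref{BBB} and its dual \ref{AAA} along consecutive triangles, the quantity $|C_r\cap A|$ is monotone in a suitable sense as $r$ increases (it cannot drop from positive to $0$, nor from $3$ to less-than-$3$, when we advance an index); combined with part \ref{BA}, which pins down the membership of a moving vertex when the other two vertices of the triangle straddle $B$ and $A$, this forces each of $A\cap X_0$, $A\cap Y_0$, $A\cap Z_0$ to be a suffix of its chain. Then I split into cases according to the pattern of these three suffixes relative to the cyclic-triangle sequence. In the generic case the three thresholds ``line up'' and $v$ can be inserted into one of $X_0,Y_0,Z_0$ (whichever chain $v$'s neighborhood is compatible with via Proposition~\ref{twotransitive}), giving the desired three-part partition of $V(G)$. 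The degenerate cases—where inserting $v$ into an existing part would break transitivity of a pairwise union—should be handled by instead putting $v$ into a part by itself, or by rebalancing two of the three parts; here one uses primeness of $G$ (to rule out $v$ being a ``dominating'' or ``dominated'' vertex, and more generally to rule out $\{v\}\cup(\text{one part})$ or similar sets being homogeneous) and the $T_5/U_5$-freeness via Proposition~\ref{lemma}\ref{BBA}--\ref{BAA} to locate $v$ precisely.

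The main obstacle I anticipate is exactly this case analysis at the end: showing that the three suffix-thresholds for $A$ along $X_0$, $Y_0$, $Z_0$ can always be realized by a \emph{single} legal placement of $v$ into a three-part partition of $V(G)$. The constraints from Proposition~\ref{lemma} are ``local'' (about consecutive triangles in one particular Proposition~\ref{sequence} sequence), whereas transitivity of the pairwise unions is ``global,'' so the bookkeeping of which chain $v$ joins—and whether a fourth ``part'' is momentarily needed and then merged—will require care. The case $|V(G)|\le 5$ (especially $|V(G)|=5$, where Theorem~\ref{critical} does not apply and $G$ is $T_5$, $U_5$, or $W_5$) must be dispatched separately at the base of the induction: $T_5$ and $U_5$ are excluded by hypothesis, and $W_5$ does have the required partition by the computation in Section~\ref{examples}. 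I would also double-check the hypotheses of Theorem~\ref{critical} in the inductive step: we need $G$ not isomorphic to any $T_n$, $U_n$, $W_n$—$T_n$ and $U_n$ are out because $G$ is $T_5$- and $U_5$-free (for $n\ge 5$) and prime with $\ge 6$ vertices (for $n=3$), and $W_n$ with $n\ge 5$ contains $U_5$ so is also out, which is fine since $W_3$ is just the cyclic triangle on $3<6$ vertices.
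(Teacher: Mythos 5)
Your outline follows the same architecture as the paper's proof---induction via Theorem~\ref{critical}, a cyclic-triangle sequence from Proposition~\ref{sequence}, the constraints of Proposition~\ref{lemma} forcing $A = A_G(v)$ to meet each chain in a suffix, and Proposition~\ref{twotransitive} to certify the enlarged partition---but there are two genuine problems. First, a factual error: you exclude $G \cong W_n$ on the grounds that $W_n$ contains $U_5$ (or $T_5$) for $n \ge 5$. It does not: the only prime subtournaments of $W_n$ on at least three vertices are the $W_m$ for $m < n$, so $W_n$ is both $T_5$-free and $U_5$-free and is \emph{not} ruled out by the hypotheses. That case must instead be handled affirmatively, by exhibiting the partition $X = \{v\}$, $Y = \{w_i : i \text{ odd}\}$, $Z = \{w_i : i \text{ even}\}$; only after setting it aside may you invoke Theorem~\ref{critical}. (Your justification for excluding $U_n$ is also off---$U_n$ contains $U_5$, not $T_5$---though the conclusion there is correct.)

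Second, and more seriously, the proposal stops exactly where the real work begins. You correctly identify that the difficulty is showing the three suffix-thresholds admit a single legal placement of $v$, but you leave this open and hedge with fallback strategies (putting $v$ in a part by itself, ``rebalancing'' two parts) that are in fact never needed. The resolution is to dualize so that some triangle of the sequence satisfies $|C_r \cap A| = 2$, take the \emph{minimal} such $r_0$, write $C_{r_0} = \{x_{i_0}, y_{j_0}, z_{k_0}\}$ with $x_{i_0} \ra y_{j_0} \ra z_{k_0} \ra x_{i_0}$, $x_{i_0}, y_{j_0} \in A$, $z_{k_0} \in B$, and insert $v$ into the chain of $x_{i_0}$, proving that $\{X \cup \{v\}, Y, Z\}$ always works. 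Verifying this requires a chain of claims: downward induction below $r_0$ and upward induction above $r_0$ along the triangle sequence using parts \ref{BBB}, \ref{AAA}, and \ref{BA} of Proposition~\ref{lemma}, plus an appeal to the primeness of $G^\prime$ (to rule out $\{x_i, x_{i+1}\}$ being a homogeneous set) at the one point where Proposition~\ref{lemma} alone does not decide the membership of $x_{i+1}$, and finally the interleaving inequalities $s_{i_0-1} \le s \le s_{i_0}$ and $t_{i_0-1} \le t \le t_{i_0}$ needed to apply Proposition~\ref{twotransitive}. None of this is present, so as written you have an essentially correct plan rather than a proof. A minor further point: in the ``if'' direction the clean argument is that some pairwise union contains at least four vertices of any five-vertex subtournament, which therefore contains a transitive set of size \emph{four}; neither $T_5$ nor $U_5$ contains $I_4$. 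Your ``transitive set of size 3'' version does not suffice as stated, though your alternative (heredity of the partition property plus a finite check on $T_5$ and $U_5$) is fine.
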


\begin{proof}
We organize the proof into three steps.

\begin{step}
``If'' direction
\end{step}

Suppose $V(G)$ can be partitioned into sets $X$, $Y$, $Z$ such that $X \cup Y$, $Y \cup Z$, and $Z \cup X$ are transitive. Then for any five-vertex subtournament $H$ of $G$, at least one of $|V(H) \cap X|$, $|V(H) \cap Y|$, $|V(H) \cap Z|$ is at most 1, so at least one of $|V(H) \cap (X \cup Y)|$, $|V(H) \cap (Y \cup Z)|$, $|V(H) \cap (Z \cup X)|$ is at least 4. Hence, $H$ has a four-vertex transitive subtournament, and thus cannot be $T_5$ or $U_5$. So $G$ is $T_5$-free and $U_5$-free, proving one direction of the theorem.

\begin{step}
Setting up the induction
\end{step}

We now prove the other direction. Suppose $G$ is $T_5$-free and $U_5$-free.
We wish to prove that $V(G)$ can be partitioned into sets $X$, $Y$, $Z$ such that $X \cup Y$, $Y \cup Z$, and $Z \cup X$ are transitive.
We proceed by induction on $|V(G)|$.
If $|V(G)| \le 5$, then since $G$ is prime and is $T_5$-free and $U_5$-free, we have either $|V(G)| \le 2$ or $G$ is $W_n$ for $n = 3$ or 5. In the former case, let $X = V(G)$ and $Y$, $Z = \varnothing$; in the latter case, let $X = \{v\}$, $Y = \{w_i : i \text{ odd}\}$, and $Z = \{w_i : i \text{ even}\}$, where $v$, $w_1$, \dots, $w_n$ are as in Definition~\ref{TUW}. In either case, $X \cup Y$, $Y \cup Z$, and $Z \cup X$ are transitive, as desired.

Now, assume $|V(G)| \ge 6$, and that the theorem holds for all prime tournaments $G^\prime$ with $|V(G^\prime)| < |V(G)|$. If $G$ is $W_n$ for some $n$, then we are done by setting $X = \{v\}$, $Y = \{w_i : i \text{ odd}\}$, and $Z = \{w_i : i \text{ even}\}$ as before. Assume $G$ is not $W_n$ for any $n$. Since $G$ is $T_5$-free and $U_5$-free, it is not $T_n$ or $U_n$ for any $n$.
Thus, by Theorem~\ref{critical}, there is a prime subtournament $G^\prime$ of $G$ with $|V(G^\prime)| = |V(G)| - 1$.
Now, $G^\prime$ is $T_5$-free and $U_5$-free, so by the inductive hypothesis we can partition $V(G^\prime)$ into sets $X$, $Y$, and $Z$ such that $X \cup Y$, $Y \cup Z$, and $Z \cup X$ are transitive. Let $X = \{x_1,\dotsc,x_\ell\}$, $Y = \{y_1,\dotsc,y_m\}$, and $Z = \{z_1,\dotsc,z_n\}$ such that $x_i \ra x_j$ if $i < j$, and similarly for $Y$ and $Z$.
Since $G^\prime$ is strongly connected (because $G^\prime$ is prime and $|V(G^\prime)| \ge 5$), there exists a sequence $C_1$, $C_2$, \dots, $C_{|V(G^\prime)|-2}$ of cyclic triangles in $G^\prime$ as in Theorem~\ref{sequence}.

Let $v$ be the vertex such that $\{v\} = V(G) \minus V(G^\prime)$. For convenience, let $B = B_G(v)$ and $A = A_G(v)$; thus, $V(G^\prime) = B \cup A$.
Now, $V(G^\prime) = C_1 \cup \dotsb \cup C_{|V(G^\prime)|-2}$. Suppose that for all $1 \le r \le |V(G)| - 2$, we have either $C_r \subseteq B$ or $C_r \subseteq A$. Then since $C_r \cap C_{r+1} \neq \varnothing$ for all $1 \le r < |V(G^\prime)|-2$, we must have either $V(G^\prime) \subseteq B$ or $V(G^\prime) \subseteq A$. But then $V(G^\prime)$ is a nontrivial homogeneous set of $G$, contradicting the primeness of $G$. Thus, there is some $1 \le r \le |V(G^\prime)|-2$ such that neither $C_r \subseteq B$ nor $C_r \subseteq A$; i.e., there is some $r$ such that  $|C_r \cap A| = 1$ or 2. By taking the dual of $G$ if necessary, we may assume there is some $r$ such that $|C_r \cap A| = 2$. Choose $r_0$ to be the minimum $r$ such that $|C_r \cap A| = 2$. Let $C_{r_0} = \{x_{i_0}, y_{j_0}, z_{k_0}\}$, and without loss of generality, assume 
\begin{itemize}
\item $x_{i_0} \ra y_{j_0} \ra z_{k_0} \ra x_{i_0}$, and 
\item $x_{i_0} \in A$, $y_{j_0} \in A$, and $z_{k_0} \in B$.
\end{itemize}

We set some final notation before proceeding. 
\begin{itemize}
\item Let $s$ be the smallest integer such that $y_s \in A$ (thus, $s \le j_0$), and 
\item let $t$ be the largest integer such that $z_{t-1} \in B$ (thus, $t \ge k_0+1$). 
\end{itemize}
In addition, since $X \cup Y$ and $X \cup Z$ are transitive, for each $x_i \in X$ 
\begin{itemize}
\item let $1 \le s_i \le m+1$ be the integer such that $\{y_j : j < s_i\} \Ra x_i \Ra \{y_j : j \ge s_i\}$, and 
\item let $1 \le t_i \le n+1$ be the integer such that $\{z_k : k < t_i\} \Ra x_i \Ra \{z_k : k \ge t_i\}$. 
\end{itemize}
Note that $s_1 \le \dotsb \le s_\ell$ and $t_1 \le \dotsb \le t_\ell$.

\begin{step}
Completing the induction
\end{step}

We claim that for the partition $\left\{ X \cup \{v\}, Y, Z \right\}$ of $V(G)$, the sets $X \cup \{v\} \cup Y$, $X \cup \{v\} \cup Z$, and $Y \cup Z$ are transitive, which will prove the theorem. To prove this claim, it suffices (by Proposition~\ref{twotransitive}) to show the following.
\renewcommand{\theenumi}{(\arabic{enumi})}
\renewcommand{\labelenumi}{\theenumi}
\begin{enumerate}
\item\label{Xtrans} $\{x_i : i < i_0\} \Ra v \Ra \{x_i : i \ge i_0\}$.
\item\label{Ytrans} $\{y_j : j < s\} \Ra v \Ra \{y_j : j \ge s\}$.
\item\label{Ztrans} $\{z_k : k < t\} \Ra v \Ra \{z_k : k \ge t\}$.
\item\label{Yfit} $s_{i_0-1} \le s \le s_{i_0}$ (if $i_0 = 1$, only the right-hand inequality needs to be proved).
\item\label{Zfit} $t_{i_0-1} \le t \le t_{i_0}$ (if $i_0 = 1$, only the right-hand inequality needs to be proved).
\end{enumerate}
We prove these statements through a series of claims.

\begin{claim}\label{direction}
For all $1 \le r \le |V(G^\prime)|-2$, if $C_r = \{x_i,y_j,z_k\}$, then $x_i \ra y_j \ra z_k \ra x_i$.
\end{claim}

This follows from the assumption $x_{i_0} \ra y_{j_0} \ra z_{k_0} \ra x_{i_0}$ and the properties of the sequence $C_1$, \dots, $C_{|V(G^\prime)|-2}$.

\begin{claim}\label{before}
Suppose $C_r = \{x_i,y_j,z_k\}$ for some $r$ and $x_i \in B$, $y_j \in A$, and $z_k \in B$. Then $s_i \le s$ and $t_i \le t$.
\end{claim}

Suppose $s_i > s$. Then by the definition of $s_i$, we have $y_s \ra x_i$. Since $x_i \ra y_j$ and $X \cup Y$ is transitive, we also have $y_s \ra y_j$. Then since $y_j \ra z_k$ and $Y \cup Z$ is transitive, we have $y_s \ra z_k$. So $y_s \Ra C_r$. But $|C_r \cap A| = 1$, and by the definition of $s$, we have $y_s \in A$. This contradicts Proposition~\ref{lemma}\ref{BBA}. So we must have $s_i \le s$.

Now suppose that $t_i > t$. In particular, this means $t \le t_i-1 \le n$, so the vertex $z_t$ exists. By the definition of $t_i$, we have $z_t \ra x_i$. Also, since $z_k \in B$, by the definition of $t$ we have $k < t$, so $z_k \ra z_t$. Finally, since $y_j \ra z_k$ and $Y \cup Z$ is transitive, we have $y_j \ra z_t$. Thus, $\{z_k,x_i,y_j\}$ and $\{z_t,x_i,y_j\}$ are cyclic triangles with $z_k \ra z_t$. However, $x_i \in B$, $y_j \in A$, $z_k \in B$, and by the definition of $t$, $z_t \in A$. This contradicts Proposition~\ref{lemma}\ref{BA}. So $t_i \le t$, as desired.

\begin{claim}\label{after}
Suppose $C_r = \{x_i,y_j,z_k\}$ for some $r$ and $x_i \in A$, $y_j \in A$, and $z_k \in B$. Then $s_i \ge s$ and $t_i \ge t$.
\end{claim}

Suppose $s_i < s$. In particular, we have $s-1 \ge s_i \ge 1$, so the vertex $y_{s-1}$ exists. By the definition of $s_i$, $x_i \ra y_{s-1}$. Also, since $y_j \in A$, by the definition of $s$ we have $s-1 < j$, so $y_{s-1} \ra y_j$. Finally, since $y_j \ra z_k$ and $Y \cup Z$ is transitive, we have $y_{s-1} \ra z_k$. Thus, $\{y_{s-1},z_k,x_i\}$ and $\{y_j,z_k,x_i\}$ are cyclic triangles with $y_{s-1} \ra y_j$. However, $z_k \in B$, $x_i \in A$, $y_j \in A$, and by the definition of $s$, $y_{s-1} \in B$. This contradicts Proposition~\ref{lemma}\ref{BA}. So $s_i \ge s$.

Now suppose $t_i <  t$. By the definition of $t_i$, we have $x_i \ra z_{t-1}$. Since $z_k \ra x_i$ and $Z \cup X$ is transitive, we also have $z_k \ra z_{t-1}$. Since $y_j \ra z_k$ and $Y \cup Z$ is transitive, we then have $y_j \ra z_{t-1}$. So $C_r \Ra z_{t-1}$. But $|C_r \cap A| = 2$, and by the definition of $t$, we have $z_{t-1} \in B$. This contradicts Proposition~\ref{lemma}\ref{BAA}. So $t_i \ge t$, as desired.

\begin{claim}\label{r0minus1}
If $r_0 > 1$, then $C_{r_0-1} = \{x_{i_0-1}, y_{j_0}, z_{k_0}\}$, and $x_{i_0-1} \in B$, $y_{j_0} \in A$, and $z_{k_0} \in B$.
\end{claim}

Assume $r_0 > 1$. Recall that by assumption, $x_{i_0} \in A$, $y_{j_0} \in A$, and $z_{k_0} \in B$. By the defintion of $r_0$, we must have $|C_{r_0-1}| \neq 2$. So one of the following holds.
\begin{itemize}
\item $C_{r_0-1} = \{x_{i_0-1}, y_{j_0}, z_{k_0}\}$ and $x_{i_0} \in B$.
\item $C_{r_0-1} = \{x_{i_0}, y_{j_0-1}, z_{k_0}\}$ and $y_{j_0-1} \in B$.
\item $C_{r_0-1} = \{x_{i_0}, y_{j_0}, z_{k_0-1}\}$ and $z_{j_0-1} \in A$.
\end{itemize}
If we have the first case then we are done. The second case contradicts Proposition~\ref{lemma}\ref{BA}. The third case contradicts Proposition~\ref{lemma}\ref{AAA}. This proves the claim.

\begin{claim}\label{down}
For all $1 \le r < r_0$, if $C_r = \{x_i,y_j,z_k\}$, then $x_i \in B$ and $z_k \in B$. In addition, $y_j \in A$ if $j \ge s$ and $y_j \in B$ otherwise.
\end{claim}

If $r_0 = 1$, there is nothing to prove. So assume $r_0 > 1$. We prove the claim by downward induction on $r$. If $r = r_0 - 1$, then the conclusion follows by Claim~\ref{r0minus1} and the fact that $j_0 \ge s$. Now suppose the conclusion holds for some $2 \le r < r_0$. Let $C_r = \{x_i,y_j,z_k\}$. First suppose that $j < s$. Then $|C_r \cap A| = 0$ by the inductive hypothesis. Thus, by Proposition~\ref{lemma}\ref{BBB} we have $|C_{r-1} \cap A| = 0$ as well, as desired.

Now suppose that $j \ge s$. By the inductive hypothesis, $x_i \in B$, $y_j \in A$, and $z_k \in B$. We have three cases.
\begin{itemize}
\item $C_{r-1} = \{x_{i-1}, y_j, z_k\}$.
\item $C_{r-1} = \{x_i, y_{j-1}, z_k\}$.
\item $C_{r-1} = \{x_i, y_j, z_{k-1}\}$.
\end{itemize}
For the first case, since $|C_{r-1} \cap A| \neq 2$ by the definition of $r_0$, we must have $x_{i-1} \in B$, as desired. Similarly, in the third case we must have $z_{k-1} \in B$, as desired. 

Finally, suppose we have the second case. We wish to prove that $y_{j-1} \in B$ if $j=s$, and $y_{j-1} \in A$ if $j > s$. If $j = s$, then by the definition of $s$ we must have $y_{j-1} \in B$, as desired. 
Now suppose $j > s$. Suppose $y_{j-1} \in B$. Then $|C_{r-1} \cap A| = 0$, so by Proposition~\ref{lemma}\ref{BBB} and downward induction, we have $|C_{r^\prime} \cap A| = 0$ for all $r^\prime \le r-1$. However, then $y_{j^\prime} \in B$ for all $j^\prime \le j-1$, and in particular $y_s \in B$ since $j > s$. This contradicts the fact that $y_s \in A$ by the definition of $s$. Thus, $y_{j-1} \in A$, as desired.

\begin{claim}\label{up}
For all $r_0 \le r \le |V(G^\prime)|-2$, if $C_r = \{x_i,y_j,z_k\}$, then $x_i \in A$ and $y_j \in A$. In addition, $z_k \in B$ if $k < t$ and $z_k \in A$ otherwise.
\end{claim}

We induct upwards on $r$. If $r = r_0$, the conclusion holds by assumption and the fact that $k_0+1 \le t$. Now suppose the conclusion holds for some $r_0 \le r \le |V(G^\prime)|-3$. Let $C_r = \{x_i,y_j,z_k\}$. First suppose $k \ge t$. Then $|C_r \cap A| = 3$ by the inductive hypothesis, so by Proposition~\ref{lemma}\ref{AAA}, we have $|C_{r+1} \cap A| = 3$, as desired.

Now suppose that $k < t$. By the inductive hypothesis, $x_i \in A$, $y_j \in A$, and $z_k \in B$. We have three cases.
\begin{itemize}
\item $C_{r+1} = \{x_{i+1}, y_j, z_k\}$.
\item $C_{r+1} = \{x_i, y_{j+1}, z_k\}$.
\item $C_{r+1} = \{x_i, y_j, z_{k+1}\}$.
\end{itemize}
For the second case, by Proposition~\ref{lemma}\ref{BA} we must have $y_{j+1} \in A$, as desired. Now suppose we have the third case. We wish to prove that $z_{k+1} \in A$ if $k = t-1$, and $z_{k+1} \in B$ if $k < t-1$.
If $k = t-1$, then by the definition of $t$ we have $z_{k+1} \in A$, as desired. Now suppose $k < t-1$, and suppose that $z_{k+1} \in A$. Then $|C_{r+1} \cap A| = 3$, so by Proposition~\ref{lemma}\ref{AAA} and upward induction, we have $|C_{r^\prime} \cap A| = 3$ for all $r^\prime \ge r+1$. Then $z_{k^\prime} \in A$ for all $k^\prime \ge k+1$, so in particular $z_{t-1} \in A$ since $k < t-1$. This contradicts the fact that $z_{t-1} \in B$ by the definition of $t$. Thus, $z_{k+1} \in B$, as desired.

Finally, suppose we have the first case. We wish to prove $x_{i+1} \in A$. Suppose that $x_{i+1} \in B$. Applying Claim~\ref{before} to $C_{r+1}$, we have $s_{i+1} \le s$ and $t_{i+1} \le t$. However, applying Claim~\ref{after} to $C_r$, we have $s_i \ge s$ and $t_i \ge t$. Since $s_i \le s_{i+1}$ and $t_i \le t_{i+1}$ by Proposition~\ref{twotransitive}, we must therefore have $s_i = s_{i+1} = s$ and $t_i = t_{i+1} = t$. It follows that
\begin{align*}
\{x_{i^\prime} : i^\prime < i\} &\Ra \{x_i, x_{i+1}\} \Ra \{x_{i^\prime} : i^\prime > i+1\}, \\
\{y_{j^\prime} : j^\prime < s\} &\Ra \{x_i, x_{i+1}\} \Ra \{y_{j^\prime} : j^\prime \ge s\}, \\
\{z_{k^\prime} : k^\prime < t\} &\Ra \{x_i, x_{i+1}\} \Ra \{z_{k^\prime} : k^\prime \ge t\}
\end{align*}
and hence $\{x_i,x_{i+1}\}$ is a homogeneous set in $G^\prime$. This contradicts the fact that $G^\prime$ is prime. So we must have $x_{i+1} \in A$, as desired.

\begin{claim}
(1) through (5) hold.
\end{claim}

\ref{Xtrans}, \ref{Ytrans}, and \ref{Ztrans} follow by Claims~\ref{down} and \ref{up}. For $i_0 > 1$, the lower bounds of \ref{Yfit} and \ref{Zfit} follow by Claims~\ref{r0minus1} and \ref{before}. The upper bounds of \ref{Yfit} and \ref{Zfit} follow by Claim \ref{after}. This completes the proof of the theorem.
\end{proof}

\section{Large transitive subtournaments}\label{transsub}

The tournament version of the Erd\H{o}s-Hajnal conjecture states that for every tournament $H$, there exists a constant $\epsilon > 0$ such that every $H$-free tournament with $n$ vertices has a transitive subtournament with at least $n^\epsilon$ vertices. (This conjecture was stated in \cite{APS} and proven there to be equivalent to the original undirected graph version of the conjecture.) Berger, Choromanski, and Chudnovsky proved in \cite{BCC} that every tournament with at most five vertices satisfies the Erd\H{o}s-Hajnal conjecture. Here, we give a precise result for $U_5$.

\begin{thm}\label{log32}
If $G$ is a $U_5$-free tournament, then $G$ has a transitive subtournament with at least $|V(G)|^{\log_3 2}$ vertices. In addition, there are infinitely many $U_5$-free tournaments $G$ which have no transitive subtournament with more than $|V(G)|^{\log_3 2}$ vertices. 
\end{thm}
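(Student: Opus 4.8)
The plan has two halves: the infinite family of tight examples, and the lower bound $t(G)\ge |V(G)|^{\log_3 2}$, where I write $t(G)$ for the number of vertices of a largest transitive subtournament of $G$ and $\alpha=\log_3 2$ (so $3^\alpha=2$). The basic engine throughout is the identity
\[
t\big(H[G_1,\dots,G_k]\big)=\max\Big\{\textstyle\sum_{v\in S}t(G_v)\ :\ S\subseteq V(H),\ S\text{ transitive}\Big\},
\]
where $H[G_1,\dots,G_k]$ denotes substituting $G_v$ for the vertex $v$ of $H$; the $\le$ direction holds because the vertex set of any transitive subtournament meets the $G_v$ in transitive sets and projects to a transitive subset of $H$, and $\ge$ is immediate. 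In particular, for $H$ transitive this is $\sum_v t(G_v)$, and for $H$ the cyclic triangle $C=T_3$ it is $\max\big(t(G_1)+t(G_2),\,t(G_2)+t(G_3),\,t(G_3)+t(G_1)\big)$.

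\textbf{Tight examples.} Define $J_1=I_1$ and $J_{d+1}=C[J_d,J_d,J_d]$. Then $|V(J_d)|=3^{d-1}$, and each $J_d$ is $U_5$-free because $C$ is $U_5$-free, $U_5$ is prime, and hence substitution of $U_5$-free tournaments into $C$ stays $U_5$-free. By the formula above, $t(J_{d+1})=2\,t(J_d)$ and $t(J_1)=1$, so $t(J_d)=2^{d-1}=(3^{d-1})^{\alpha}=|V(J_d)|^{\alpha}$. Thus $J_1,J_2,\dots$ are infinitely many $U_5$-free tournaments with no transitive subtournament exceeding the stated bound.

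\textbf{Lower bound.} I would prove $t(G)\ge |V(G)|^{\alpha}$ for all $U_5$-free $G$ by strong induction on $n=|V(G)|$, using repeatedly that $x\mapsto x^{\alpha}$ is subadditive (so $\sum a_i^{\alpha}\ge(\sum a_i)^{\alpha}$) and the base inequalities $\tfrac{n+1}{2}\ge n^{\alpha}$ (for odd $n\ge1$) and $\lceil 2n/3\rceil\ge n^{\alpha}$ (for $n\ge3$), both of which are equalities at $n=3$ and then hold because the left sides grow faster. If $G$ is not strongly connected, write $G$ as the linear sum of its strong components $C_1,\dots,C_j$; then $t(G)=\sum t(C_i)\ge\sum|C_i|^{\alpha}\ge n^{\alpha}$. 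If $G$ is strongly connected and prime, then by Theorem~\ref{noU} (since $U_5$ is not $U_5$-free) $G$ is either $T_n$, where the half-interval $\{v_1,\dots,v_{(n+1)/2}\}$ is transitive so $t(G)\ge\tfrac{n+1}{2}\ge n^{\alpha}$, or $V(G)=X\cup Y\cup Z$ with $X\cup Y,\,Y\cup Z,\,Z\cup X$ transitive, where $t(G)\ge\max(|X|{+}|Y|,|Y|{+}|Z|,|Z|{+}|X|)\ge\lceil 2n/3\rceil\ge n^{\alpha}$. Finally, if $G$ is strongly connected and not prime, the modular (Gallai) decomposition gives $G=H[G_1,\dots,G_k]$ where $H$ is $U_5$-free (being an induced subtournament of $G$) and, since $G$ is strongly connected, $H$ is prime on $k\ge3$ vertices; each $G_v$ is $U_5$-free with $1\le n_v:=|V(G_v)|<n$. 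By the induction hypothesis and the engine formula, $t(G)\ge\max_S\sum_{v\in S}n_v^{\alpha}$ over transitive $S\subseteq V(H)$, so it suffices to prove the weighted inequality: for every prime $U_5$-free $H$ on $k\ge3$ vertices and all positive integers $(n_v)$, there is a transitive $S\subseteq V(H)$ with $\sum_{v\in S}n_v^{\alpha}\ge\big(\sum_{v\in V(H)}n_v\big)^{\alpha}$.

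\textbf{The obstacle.} This weighted inequality is where the real work lies; conceptually it is just careful elementary estimation, all of it anchored to $3^{\alpha}=2$. Applying Theorem~\ref{noU} once more, $H$ is either $T_k$ or has the partition $X,Y,Z$ above. In the partition case the three transitive sets $X\cup Y,\,Y\cup Z,\,Z\cup X$ cover $V(H)$ with multiplicity two, and I would choose the one of largest $n_v^{\alpha}$-weight: it keeps two of the three groups, including the group containing the heaviest vertex, and the resulting inequality is checked directly — the case $|X|=|Y|=|Z|=1$, i.e.\ $H=C$, is exactly tight and is precisely what pins the exponent at $\alpha$. In the case $H=T_k$ the transitive subsets are exactly the half-intervals of $(k+1)/2$ consecutive vertices; I would take the half-interval containing the vertex $v^*$ with $n_{v^*}$ largest, and split the verification according to whether the weights are roughly balanced — where the averaging bound $t(G)\ge\frac{k+1}{2k}\sum_v n_v^{\alpha}$ combined with $\frac{k+1}{2}\ge k^{\alpha}$ already suffices — or concentrated, where the chosen half-interval contributes $n_{v^*}^{\alpha}$ plus at least $(k-1)/2$ further terms each $\ge1$, and one uses that $(m+r)^{\alpha}-m^{\alpha}$ is small by concavity of $x^{\alpha}$. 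I expect the case analysis and the arithmetic of these elementary inequalities — not any single hard idea — to be the bulk of the proof.
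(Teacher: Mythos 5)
Your overall architecture is the same as the paper's: the same tight family (iterated substitution of $T_3$ into itself, with the count $t(J_{d+1})=2\,t(J_d)$), the same induction via the substitution/modular decomposition reducing everything to a weighted inequality over a prime $U_5$-free quotient, the same appeal to Theorem~\ref{noU} to split that quotient into the partition case and the $T_k$ case, and the same concavity/subadditivity toolkit. The tightness half and the reduction are correct, and the partition case is fine in outline --- the inequality you say is ``checked directly'' is exactly the paper's Proposition~\ref{2outof3} ($a^\alpha+b^\alpha\ge(a+b+c)^\alpha$ when $c=\min(a,b,c)$), which does need its own small Karamata-plus-one-variable-concavity argument, but it is a clean self-contained fact.

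The genuine gap is the $T_k$ case of your weighted inequality: the balanced/concentrated dichotomy you sketch does not cover all weight configurations. Take $H=T_5$ with weights $(28,3,3,3,3)$, so $n=40$ and $n^\alpha\approx 10.25$. Your concentrated bound gives $n_{v^*}^\alpha+\tfrac{k-1}{2}=28^\alpha+2\approx 10.19<n^\alpha$, and your averaging bound gives $\tfrac{k+1}{2k}\sum_v n_v^\alpha=0.6\,(28^\alpha+4\cdot 3^\alpha)\approx 9.71<n^\alpha$; both fail simultaneously (and the deficit of the concentrated bound grows without limit under scaling $(28s,3s,3s,3s,3s)$, since $(n)^\alpha-(n_{v^*})^\alpha\to\infty$ while the number of extra terms stays $(k-1)/2$). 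The statement itself is true here --- the best half-interval has weight $28^\alpha+2\cdot 3^\alpha\approx 12.19$ --- but neither of your two estimates certifies it, because bounding the non-maximal terms by $1$ each throws away too much. The paper's actual argument for this case is sharper: after dualizing so that the half-interval ahead of the heaviest block $V_1$ (with $a=|V_1|=\max_i|V_i|$) is at least as heavy as the one behind it, it splits on $a\ge n/3$ versus $a<n/3$. For $a\ge n/3$ it applies the two-out-of-three inequality to the triple $\bigl(a,\ |V_2|+\dots+|V_{(m+1)/2}|,\ |V_{(m+3)/2}|+\dots+|V_m|\bigr)$; for $a<n/3$ it uses the linearization $\sum_i |V_i|^\alpha\ge a^{\alpha-1}\sum_i|V_i|$ (valid by Karamata since every $|V_i|\le a$) to get a lower bound of $\tfrac12\bigl(a^\alpha+na^{\alpha-1}\bigr)$, and then minimizes this one-variable function over $(0,n/3]$. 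Some estimate of this refined type --- one that uses the actual sizes of the non-maximal blocks rather than just their number --- is needed to close your argument.
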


\begin{rem}
For a tournament $H$, let $\xi(H)$ denote the supremum of all $\epsilon$ for which there exists $c >0$ such that every $H$-free tournament $G$ has a transitive subtournament with at least $c |V(G)|^\epsilon$ vertices. Then Theorem~\ref{log32} implies that $\xi(U_5) = \log_3 2$. In \cite{BCCF} and \cite{CCS}, all the tournaments $H$ with $\xi(H) = 1$ were characterized, and it was shown that there are no tournaments $H$ with $5/6 < \xi(H) < 1$.
\end{rem}

Before proving Theorem~\ref{log32}, we introduce the following terminology. 

\begin{defn}\label{substitute}
Let $G^\prime$ be a tournament, and fix an ordering $(v_1,\dotsc,v_m)$ of its vertices. Then for non-null tournaments $H_1$, \dots, $H_m$, let $G^\prime(H_1,\dotsc,H_m)$ denote the tournament with vertex set $V_1 \cup \dotsb \cup V_m$ such that
\begin{itemize}
\item $V_1$, \dots, $V_m$ are pairwise disjoint,
\item for all $i$, the subtournament of $G^\prime(H_1,\dotsc,H_m)$ induced on $V_i$ is isomorphic to $H_i$, and
\item $V_i \Ra V_j$ in $G^\prime(H_1,\dotsc,H_m)$ if $v_i \ra v_j$ in $G^\prime$.
\end{itemize}
\end{defn}

Every tournament $G$ with $|V(G)| > 1$ can be written as $G^\prime(H_1,\dotsc,H_m)$ for some prime tournament $G^\prime$ with $|V(G^\prime)| > 1$. Moreover, if $H$ is a prime tournament, then $G^\prime(H_1,\dotsc,H_m)$ is $H$-free if and only if $G^\prime$, $H_1$, \dots, $H_m$ are all $H$-free.

Our proof of Theorem~\ref{log32} will rely on the classical \emph{Karamata's inequality} for concave functions, which we state below for convenience.

\begin{thm}[Karamata]\label{karamata}
Let $f$ be a real-valued, concave function defined on an interval $I \subseteq \bb{R}$. Suppose $x_1$, \dots, $x_n$, $y_1$, \dots, $y_n \in I$ such that
\begin{itemize}
\item $x_1 \ge \dotsb \ge x_n$ and $y_1 \ge \dotsb \ge y_n$, 
\item $x_1 + \dotsb + x_i \le y_1 + \dotsb + y_i$ for all $1 \le i < n$, 
\item $x_1 + \dotsb + x_n = y_1 + \dotsb + y_n$.
\end{itemize}
Then
\[
f(x_1) + \dotsb + f(x_n) \ge f(y_1) + \dotsb + f(y_n).
\]
\end{thm}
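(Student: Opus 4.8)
The plan is to prove Karamata's inequality by the classical summation-by-parts (Abel summation) argument, reducing everything to the single structural fact that the secant slopes of a concave function are monotone. Concretely, for each index $i$ I would set
\[
c_i = \frac{f(x_i) - f(y_i)}{x_i - y_i}
\]
when $x_i \neq y_i$, so that $f(x_i) - f(y_i) = c_i(x_i - y_i)$ holds identically; when $x_i = y_i$ the right-hand side vanishes for every choice of $c_i$, so I would instead let $c_i$ be any supergradient of $f$ at the common point $x_i = y_i$, i.e.\ the slope of a line supporting the concave graph there from above. With this notation the quantity to be controlled becomes
\[
\sum_{i=1}^n \bigl( f(x_i) - f(y_i) \bigr) = \sum_{i=1}^n c_i (x_i - y_i),
\]
and the whole theorem reduces to showing that this sum is nonnegative.

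The key input from concavity is the claim that $c_1 \le c_2 \le \dotsb \le c_n$. This follows from the standard monotonicity of difference quotients of a concave function: the secant slope $\frac{f(b)-f(a)}{b-a}$ is nonincreasing in each of its two arguments, and supergradients are nonincreasing along the line. Since $x_{i+1} \le x_i$ and $y_{i+1} \le y_i$ (the two given sequences are nonincreasing), passing from the pair $(x_{i+1}, y_{i+1})$ to the pair $(x_i, y_i)$ increases both arguments and hence can only decrease the corresponding slope, giving $c_{i+1} \ge c_i$. The cases where $x_i = y_i$ or $x_{i+1} = y_{i+1}$ are handled by the sandwiching property of supergradients---a supergradient at a point lies between the secant slopes to any point on its right and any point on its left---which keeps the full sequence $(c_i)$ monotone.

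With monotonicity in hand I would finish by Abel summation. Writing $W_i = (x_1 + \dotsb + x_i) - (y_1 + \dotsb + y_i)$ for $0 \le i \le n$, the hypotheses say exactly that $W_0 = W_n = 0$ and $W_i \le 0$ for $1 \le i < n$. Since $x_i - y_i = W_i - W_{i-1}$, summation by parts gives
\[
\sum_{i=1}^n c_i (x_i - y_i) = \sum_{i=1}^n c_i (W_i - W_{i-1}) = \sum_{i=1}^{n-1} (c_i - c_{i+1}) W_i,
\]
where the boundary terms drop out because $W_0 = W_n = 0$. Each summand on the right is a product of $c_i - c_{i+1} \le 0$ (by the monotonicity claim) and $W_i \le 0$ (by the majorization hypothesis), hence is nonnegative, so the sum is nonnegative and the theorem follows. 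I expect the only genuinely delicate point to be the monotonicity claim for $(c_i)$, in particular its careful treatment at indices where $x_i$ equals the corresponding $y_i$; everything after that is the mechanical summation-by-parts identity together with a sign check.
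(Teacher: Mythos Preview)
Your argument is the standard Abel-summation proof of Karamata's inequality and is essentially correct; the only subtlety, which you already flag, is the choice of $c_i$ at indices where $x_i = y_i$, and this can be handled exactly as you indicate (or, alternatively, by noting that such terms contribute zero regardless of $c_i$, so one only needs to choose $c_i$ consistently with the monotonicity of the surrounding secant slopes, which concavity guarantees). One minor caveat: a concave real-valued function on an interval need not admit a finite supergradient at an endpoint of $I$ (think of $\sqrt{x}$ at $0$), but if some $x_i = y_i$ equals the left endpoint then monotonicity of the sequences forces $x_j = y_j$ for all $j \ge i$, so those indices contribute nothing and can be discarded; the right endpoint is symmetric.

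As for comparison with the paper: there is nothing to compare. The paper states Karamata's inequality as a classical result ``for convenience'' and gives no proof of it; it is invoked only as a tool in the proof of Theorem~6.1. Your write-up therefore supplies what the paper deliberately omits.
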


We use this inequality to prove the following.

\begin{prop}\label{2outof3}
Let $\gamma = \log_3 2$. If $a$, $b$, $c$ are nonnegative real numbers with $c = \min(a,b,c)$ and $a+b+c=n$, then
\[
a^\gamma + b^\gamma \ge n^\gamma.
\]
\end{prop}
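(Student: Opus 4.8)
The whole proof hinges on the numerical identity $3^\gamma = 2$, equivalently $(n/3)^\gamma = n^\gamma/2$; this is the only place the exact value $\gamma = \log_3 2$ is used. Note first that the hypothesis $c = \min(a,b,c)$ together with $a+b+c=n$ gives $3c \le n$, i.e.\ $c \le n/3$, and that both the hypothesis and the conclusion are symmetric in $a$ and $b$, so I may assume $a \ge b$, hence $a \ge b \ge c \ge 0$. The plan is to first use Karamata's inequality to reduce to the extreme configuration $b = c$, and then dispatch the resulting one-variable inequality using $3^\gamma = 2$.

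\textbf{Step 1.} I claim $a^\gamma + b^\gamma \ge (n-2c)^\gamma + c^\gamma$. Apply Theorem~\ref{karamata} with the concave function $f(x) = x^\gamma$ (concave since $0<\gamma<1$) to the triples $x = (a,b,c)$ and $y = (n-2c,\,c,\,c)$. Both have sum $n$; $x$ is nonincreasing by assumption, and $y$ is nonincreasing because $n-2c \ge c$ is equivalent to $c \le n/3$. Writing $n-2c = a+b-c$, the partial-sum hypotheses of Karamata become $a \le a+b-c$ (i.e.\ $c \le b$) and $a+b \le a+b$, both true, so $f(a)+f(b)+f(c) \ge f(n-2c)+f(c)+f(c)$, which after cancelling $f(c)$ is the claim.

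\textbf{Step 2.} It remains to prove $(n-2c)^\gamma + c^\gamma \ge n^\gamma$ for $0 \le c \le n/3$. The map $c \mapsto (n-2c)^\gamma$ is concave (a concave function composed with an affine map), as is $c \mapsto c^\gamma$, so $g(c) := (n-2c)^\gamma + c^\gamma$ is concave on $[0,n/2]$. Its values at the endpoints of $[0,n/3]$ are $g(0) = n^\gamma$ and $g(n/3) = 2(n/3)^\gamma = n^\gamma$, where the last step is precisely $3^\gamma = 2$. Since a concave function dominates the chord through its endpoint values, and that chord is here the constant $n^\gamma$, we get $g(c) \ge n^\gamma$ on $[0,n/3]$. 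Combining with Step 1 proves the proposition.

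The heart of the matter is Step 2, and it genuinely requires the value of $\gamma$: for any exponent exceeding $\log_3 2$ the inequality already fails at $a=b=c=n/3$, so no convexity-only argument (Karamata included) can prove Step 2 by itself. Karamata's role is exactly to collapse the two-variable problem to the single critical family of configurations on which the special value of $\gamma$ can be checked by hand; everything else is routine. As a sanity check one may note that when $c=0$ Step 1 already yields $a^\gamma+b^\gamma \ge n^\gamma$ outright, and when $c=n/3$ we are forced to $a=b=c=n/3$ and equality holds.
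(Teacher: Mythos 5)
Your proof is correct and follows essentially the same route as the paper's: a Karamata step reducing to the configuration $(n-2c,c)$, followed by concavity of $g(x)=(n-2x)^\gamma+x^\gamma$ on $[0,n/3]$ with equal endpoint values $n^\gamma$. The only cosmetic difference is that you apply Karamata to the three-term sequences $(a,b,c)$ and $(n-2c,c,c)$ and cancel $f(c)$, where the paper majorizes $(a,b)$ by $(n-2c,c)$ directly.
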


\begin{proof}
Note that the function $f(x) = x^\gamma$ is concave on the interval $[0,\infty)$. Now, we have $a$,~$b \ge c$ and $a+b+c = n$, and hence $a$, $b \le n-2c$. So applying Karamata's inequality to the function $f$, we have
\[
a^\gamma + b^\gamma \ge (n-2c)^\gamma + c^\gamma.
\]
Let $g(x) = (n-2x)^\gamma + x^\gamma$. Since the functions $(n-2x)^\gamma$ and $x^\gamma$ are both concave on $[0,n/3]$, we have that $g$ is concave on this interval. Hence, the minimum value of $g$ on $[0,n/3]$ occurs at $x = 0$ or $x = n/3$. We have $g(0) = g(n/3) = n^\gamma$, so the minimum of $g$ on $[0,n/3]$ is $n^\gamma$. We have $c \in [0,n/3]$, so
\[
(n-2c)^\gamma + c^\gamma \ge n^\gamma
\]
 which gives the desired inequality.
\end{proof}

We are now ready to prove Theorem~\ref{log32}.

\begin{proof}[Proof of Theorem~\ref{log32}] 
For convenience, let $\gamma = \log_3 2$.

We first show that the bound $n^\gamma$ is tight. For $n \ge 1$, define the tournaments $G_n$ inductively by $G_1 = T_3$, and $G_{n+1} = T_3(G_n,G_n,G_n)$. Then $G_n$ has $3^n$ vertices, and is $U_5$-free (since $T_3$ is $U_5$-free and by induction, $G_{n-1}$ is $U_5$-free). We claim that $G_n$ has no transitive subtournament with more than $2^n$ vertices, which will prove the claim of tightness.

We proceed by induction on $n$. For $n=1$, $G_1$ is $T_3$, which has no transitive subtournament with more than 2 vertices. Suppose $n \ge 2$, and that $G_{n-1}$ has no transitive subtournament with more than $2^{n-1}$ vertices. Write $G_n = T_3(G_{n-1},G_{n-1},G_{n-1})$, and let $V(G_n) = V_1 \cup V_2 \cup V_3$ as in Definition~\ref{substitute}. Suppose $I$ is a transitive subset of $V(G_n)$. Then $I \cap V_i$ must be empty for some $i = 1$, 2, or 3, because otherwise $I$ would contain a cyclic triangle. Without loss of generality, assume $I \cap V_3 = \varnothing$. Now, by the inductive hypothesis, $| I \cap V_1 |, | I \cap V_2 | \le 2^{n-1}$. Thus, $| I | \le 2^{n-1} + 2^{n-1} = 2^n$, as desired.

We now prove the first part of the theorem. Let $G$ be a $U_5$-free tournament. We wish to show that $G$ has a transitive subtournament with at least $|V(G)|^\gamma$ vertices. We proceed by induction on $|V(G)|$.
The theorem clearly holds if $|V(G)| \le 1$. Now, let $|V(G)| = n \ge 2$, and assume the theorem holds for all tournaments $H$ with $|V(H)| < n$.

Since $|V(G)| > 1$, there exists a prime tournament $G^\prime$, $|V(G^\prime)| > 1$, along with an ordering $(v_1,\dotsc,v_m)$ of the vertices of $G^\prime$, such that $G = G^\prime(H_1,\dotsc,H_m)$ for some non-null $H_1$, \dots, $H_m$. Then $H_1$, \dots, $H_m$ are all $U_5$-free, and each has less than $n$ vertices (since $m > 1$). Thus, by the inductive hypothesis, each $H_i$ has a transitive subtournament with at least $|V(H_i)|^\gamma$ vertices. Hence, letting $V(G) = V_1 \cup \dotsb \cup V_m$ as in Definition~\ref{substitute}, we have that for each $i$, the set $V_i$ has a transitive subset with at least $|V_i|^\gamma$ vertices.

Now, since $G^\prime$ is prime and $U_5$-free, by Theorem~\ref{noU} either $G^\prime$ is $T_m$ for some odd $n$ or $V(G^\prime)$ can be partitioned into sets $X$, $Y$, and $Z$ such that $X \cup Y$, $Y \cup Z$, and $Z \cup X$ are transitive. We consider the two cases separately.

First, suppose that $V(G^\prime)$ can be partitioned into sets $X$, $Y$, and $Z$ such that $X \cup Y$, $Y \cup Z$, and $Z \cup X$ are transitive.
For each subset $S \subseteq V(G^\prime)$, define $S^G \subseteq V(G)$ by
\[
S^G = \bigcup_{v_i \in S} V_i.
\]
(Recall that $(v_1,\dotsc,v_m)$ was the ordering of $V(G^\prime)$ we used to define $G^\prime(H_1,\dotsc,H_m)$, and hence $v_i$ corresponds to $V_i$.) In particular, $X^G$, $Y^G$, and $Z^G$ partition $V(G)$. Without loss of generality, assume $|Z^G| = \min( |X^G|, |Y^G|, |Z^G|)$.

Now, since $X \cup Y$ is transitive, and each $V_i$ contains a transitive subset with at least $|V_i|^\gamma$ vertices, it follows that $G[(X \cup Y)^G]$ has a transitive subtournament with at least
\[
\sum_{v_i \in X \cup Y} |V_i|^\gamma
\] 
vertices. We have
\[
\sum_{v_i \in X \cup Y} |V_i|^\gamma = \sum_{v_i \in X} |V_i|^\gamma + \sum_{v_i \in Y} |V_i|^\gamma,
\]
and by Karamata's inequality applied to the concave function $f(x) = x^\gamma$ on $[0,\infty)$, we have
\begin{align*}
\sum_{v_i \in X} |V_i|^\gamma &\ge \left(\sum_{v_i \in X} |V_i|\right)^\gamma = |X^G|^\gamma, \\
\sum_{v_i \in Y} |V_i|^\gamma &\ge \left(\sum_{v_i \in Y} |V_i|\right)^\gamma = |Y^G|^\gamma.
\end{align*}
Hence, $G[(X \cup Y)^G]$ has a transitive subtournament with at least $|X^G|^\gamma + |Y^G|^\gamma$ vertices. Since $|Z^G| = \min( |X^G|, |Y^G|, |Z^G|)$ and $|X^G| + |Y^G| + |Z^G| = n$, it follows by Proposition~\ref{2outof3} that 
\[
|X^G|^\gamma + |Y^G|^\gamma \ge n^\gamma,
\]
and hence $G[(X \cup Y)^G]$ has a transitive subtournament with at least $n^\gamma$ vertices, which is as desired.

Now, suppose that $G^\prime$ is $T_m$ for some odd $m$. Since $|V(G^\prime)| > 1$, we have $m \ge 3$. Choose $(v_1,\dotsc,v_m)$ to be an ordering of $V(G^\prime)$ such that $v_i \ra v_j$ if $j-i \equiv 1$, 2, \dots, $(m-1)/2 \pmod{m}$. Without loss of generality, assume
\[
|V_1| = \max_i |V_i|.
\]
By taking the dual if necessary, we may also assume that
\[
|V_2| + \dotsb + |V_{(m+1)/2}| \ge |V_{(m+3)/2}| + \dotsb + |V_{m}|.
\]

Now, we consider two cases: $|V_1| \ge n/3$ and $|V_1| < n/3$.

\begin{case}
$|V_1| \ge n/3$
\end{case}
Since $\{v_1,\dots,v_{(m+1)/2}\}$ is a transitive set in $G^\prime$, and each $V_i$ has a transitive subset with at least $|V_i|^\gamma$ vertices, it follows that $G[V_1 \cup \dotsb \cup V_{(m+1)/2}]$ has a transitive subtournament with at least
\[
|V_1|^\gamma + |V_2|^\gamma + \dotsb + |V_{(m+1)/2}|^\gamma
\]
vertices. Applying Karamata's inequality, we have
\[
|V_2|^\gamma + \dotsb + |V_{(m+1)/2}|^\gamma \ge  \left( |V_2| + \dotsb + |V_{(m+1)/2}| \right)^\gamma
\]
and hence
\[
|V_1|^\gamma + |V_2|^\gamma + \dotsb + |V_{(m+1)/2}|^\gamma \ge |V_1|^\gamma + \left( |V_2| + \dotsb + |V_{(m+1)/2}| \right)^\gamma.
\]
We will show that the right side of this inequality is at least $n^\gamma$, which will complete the proof for this case. Since $|V_1| \ge n/3$, we have $|V_2| + \dotsb + |V_{m}| \le 2n/3$. Then since $|V_2| + \dotsb + |V_{(m+1)/2}| \ge |V_{(m+3)/2}| + \dotsb + |V_{m}|$, we have
\[
|V_{(m+3)/2}| + \dotsb + |V_{m}| \le n/3.
\]
Hence, we have both
\[
|V_1| \ge n/3 \ge |V_{(m+3)/2}| + \dotsb + |V_{m}| \quad \text{and} \quad |V_2| + \dotsb + |V_{(m+1)/2}| \ge |V_{(m+3)/2}| + \dotsb + |V_{m}|.
\]
Since 
\[
|V_1| + \left( |V_2| + \dotsb + |V_{(m+1)/2}| \right) + \left( |V_{(m+3)/2}| + \dotsb + |V_{m}| \right) = n,
\]
we thus have by Proposition~\ref{2outof3} that
\[
|V_1|^\gamma + \left( |V_2| + \dotsb + |V_{(m+1)/2}| \right)^\gamma \ge n^\gamma
\]
as desired.

\begin{case}
$|V_1| < n/3$
\end{case}
As in the previous case, $G[V_1 \cup \dotsb \cup V_{(m+1)/2}]$ has a transitive subtournament with at least
\begin{equation}\label{original}
|V_1|^\gamma + |V_2|^\gamma + \dotsb + |V_{(m+1)/2}|^\gamma
\end{equation}
vertices.
Let $|V_1| = a$. Let $|V_2| + \dotsb + |V_{(m+1)/2}| = qa+r$, where $q$, $r$ are nonnegative integers and $0 \le r < a$. Since $|V_1| = \max_i |V_i|$, we have $|V_i| \le a$ for all $i$. Hence, by Karamata's inequality, we have
\[
|V_2|^\gamma + \dotsb + |V_{(m+1)/2}|^\gamma \ge \underbrace{a^\gamma + \dotsb + a^\gamma}_{q \text{ times}} + r^\gamma.
\]
We claim that $r^\gamma \ge ra^{\gamma-1}$. Indeed, it holds for $r = 0$, and if $r > 0$, 
then since $\gamma - 1 < 0$ and $r < a$, we have $r^{\gamma-1} \ge a^{\gamma-1}$, and hence $r^\gamma \ge ra^{\gamma-1}$ as desired. Thus, we have
\begin{align*}
|V_2|^\gamma + \dotsb + |V_{(m+1)/2}|^\gamma &\ge \underbrace{a^\gamma + \dotsb + a^\gamma}_{q \text{ times}} + r^\gamma \\
&\ge qa^\gamma + ra^{\gamma-1} \\
&= a^{\gamma-1}(qa+r) \\
&= a^{\gamma-1}\left( |V_2| + \dotsb + |V_{(m+1)/2}| \right).
\end{align*}
Now, since $|V_2| + \dotsb + |V_{m}| = n-a$ and $|V_2| + \dotsb + |V_{(m+1)/2}| \ge |V_{(m+3)/2}| + \dotsb + |V_{m}|$, we have $|V_2| + \dotsb + |V_{(m+1)/2}| \ge (n-a)/2$. Thus,
\begin{align*}
|V_2|^\gamma + \dotsb + |V_{(m+1)/2}|^\gamma &\ge a^{\gamma-1}\left( |V_2| + \dotsb + |V_{(m+1)/2}| \right) \\
&\ge a^{\gamma-1}\left( \frac{n-a}{2} \right).
\end{align*}
Recalling expression \eqref{original}, it follows that $G[V_1 \cup \dotsb \cup V_{(m+1)/2}]$ has a transitive subtournament with at least
\begin{align*}
|V_1|^\gamma + |V_2|^\gamma + \dotsb + |V_{(m+1)/2}|^\gamma &\ge a^\gamma + a^{\gamma-1}\left( \frac{n-a}{2} \right) \\
&= \frac{1}{2} \left( a^\gamma + na^{\gamma-1} \right)
\end{align*}
vertices.

We claim that $(1/2)(a^\gamma + na^{\gamma-1}) \ge n^\gamma$, which will complete the proof. Let 
\[
g(x) = \frac{1}{2} \left( x^\gamma + nx^{\gamma-1} \right).
\]
Then
\begin{align*}
g^\prime(x) &= \frac{1}{2} \left( \gamma x^{\gamma-1} + n(\gamma-1)x^{\gamma-2} \right) \\
&= \frac{1}{2}x^{\gamma-2} \left( \gamma x + n(\gamma-1) \right).
\end{align*}
For $x < n(1-\gamma)/\gamma$, we have $\gamma x + n(\gamma-1) < 0$. Hence, $g^\prime(x) < 0$
for all $0 < x <  n(1-\gamma)/\gamma$, so $g$ is decreasing on $(0,n(1-\gamma)/\gamma)$. Now,
\[
\frac{n(1-\gamma)}{\gamma} = \frac{n(1-\log_3 2)}{\log_3 2} > \frac{n}{3}.
\]
Thus, $(0,n/3] \subseteq (0,n(1-\gamma)/\gamma)$, so the minimum value of $g$ on $(0,n/3]$ is $g(n/3) = n^\gamma$. Since $a \in  (0,n/3]$ by assumption, we have $(1/2)(a^\gamma + na^{\gamma-1}) \ge n^\gamma$, as desired.
\end{proof}

\begin{rem}
Using Theorem~\ref{noW} and arguments similar to the ones in this proof, one can show that every $W_5$-free tournament with $n$ vertices has a transitive subtournament with at least $n^{\log_7 3}$ vertices. This bound is tight, as can be seen by considering the tournaments $G_n$ defined by $G_1 = Q_7$ and $G_{n+1} = Q_7(G_n,G_n,\dotsc,G_n)$.
\end{rem}

\end{document}